\newtheorem{assumption}{Assumption}
\newtheorem{algorithm}{Algorithm}
\DeclareMathOperator*{\argmin}{{\rm arg\,min}}
\newcommand{\diag}{{\rm Diag}}
\newcommand{\dom}{{\rm dom}}
\newcommand{\A}{{\mathcal A}}
\newcommand{\B}{{\mathcal B}}
\newcommand{\D}{{\mathcal D}}
\newcommand{\E}{{\mathcal E}}
\newcommand{\cH}{{\mathcal H}}
\newcommand{\I}{{\mathcal I}}
\newcommand{\K}{{\mathcal K}}
\newcommand{\cL}{{\mathcal L}}
\newcommand{\M}{{\mathcal M}}
\newcommand{\N}{{\mathcal N}}
\newcommand{\cP}{{\mathcal P}}
\newcommand{\Q}{{\mathcal Q}}
\newcommand{\cS}{{\mathcal S}}
\newcommand{\T}{{\mathcal T}}
\newcommand{\X}{{\mathcal X}}
\newcommand{\Y}{{\mathcal Y}}
\newcommand{\Z}{{\mathcal Z}}
\newcommand{\ds}{\displaystyle}
\newcommand{\mc}{\multicolumn}
\newcommand{\bx}{{\bf x}}
\newcommand{\by}{{\bf y}}
\newcommand{\bz}{{\bf z}}
\newcommand{\bs}{{\bf s}}
\newcommand{\bc}{{\bf c}}
\newcommand{\bb}{{\bf b}}
\newcommand{\bu}{{\bf u}}
\newcommand{\bX}{{\bf X}}
\newcommand{\bY}{{\bf Y}}
\definecolor{mycolor}{RGB}{195,220,195}
\journalname{MPC}
\begin{document}
%All acknowledgements should be placed in the back of the paper before References.
\title{A Generalized Alternating Direction Method of Multipliers with Semi-Proximal Terms for Convex Composite Conic Programming}
\titlerunning{Generalized ADMM}

\author{Yunhai Xiao \and  Liang Chen \and Donghui Li}
%%%%%%%%%%%%%%%%%%%%%%%%%%%%%%%%%%%%%%%%%%%%%%%%%%%%%%%%%%%%%%%%%%%%%%%
\institute{
Yunhai Xiao,  Corresponding author
\at
Institute of Applied Mathematics, College of Mathematics and Statistics,
Henan University, Kaifeng 475000, China.\\
\email{yhxiao@henu.edu.cn}
\and
Liang Chen
\at
College of Mathematics and Econometrics, Hunan University, Changsha 410082, China.\\
\email{chl@hnu.edu.cn}
\and
Donghui Li
\at
School of Mathematical Sciences, South China
Normal University, Guangzhou 510631, China. \\
\email{dhli@scnu.edu.cn}}
%%%%%%%%%%%%%%%%%%%%%%%%%%%%%%%%%%%%%%%%%%%%%%%%%%%%%%%%%%%%%%%%%%%%%%%%
\date{Received: date / Accepted: date}
% The correct dates will be entered by the editor

%%%%%%%%%%%%%%%%%%%%%%%%%%%%%%%%%%%%%%%%%%%%%%%%%%%%%%%%%%%%%%%%%%%%%%%%
\maketitle
\begin{abstract}
In this paper, we propose a generalized alternating direction method of multipliers (ADMM) with semi-proximal terms for solving a class of convex composite conic optimization problems, of which some are high-dimensional, to moderate accuracy. Our primary motivation is that this method, together with properly chosen semi-proximal terms, such as those generated by the recent advance of block symmetric Gauss-Seidel technique, is capable of tackling these problems. Moreover, the proposed method, which relaxes both the primal and the dual variables in a natural way with a common relaxation factor in the interval of $(0,2)$, has the potential of enhancing the performance of the classic ADMM. Extensive numerical experiments on various doubly non-negative semidefinite programming problems, with or without inequality constraints, are conducted. The corresponding results showed that all these multi-block problems can be successively solved, and the advantage of using the relaxation step is apparent.
\end{abstract}
\keywords{Convex Composite Conic Programming \and Alternating Direction Method of Multipliers \and Doubly Non-Negative Semidefinite Programming \and Relaxation \and Semi-Proximal Terms}
\subclass{90C22 \and 90C25 \and  90C06 \and 65K05}
%%%%%%%%%%%%%%%%%%%%%%%%%%%%%%%%%%%%%%%%%%%%%%%%%%%%%%%%%%%%%%%%%%%%%%%%%%

%%%%%%%%%%%%%%%%%%%%%%%%%%%%%%%%%%%%%%%%%%%%%%%%%%%55
\section{Introduction}\label{intr}
%%%%%%%%%%%%%%%%%%%%%%%%%%%%%%%%%%%%%%%%%%%%%%%%%%%%%%%%%%%%%%%%%%%%%%%%
Let $\bf X$ and $\bf Y$ be two finite-dimensional real Euclidean spaces each equipped with an inner product $\langle\cdot,\cdot\rangle$ and its induced norm $\|\cdot\|$.
The primary motivation of this paper is to develop an efficient first-order method to solve the following convex composite conic programming problem
\begin{eqnarray}\label{probc}
\begin{array}{ll}
\ds \min_{\bx} \ & \theta(\bx)+\langle \bc,\bx\rangle\\[1mm]
\mbox{s.t.} & \cH \bx-\bb\in\mathcal{C}, \ \bx\in \K,
\end{array}
\end{eqnarray}
where $\theta:\bX \to(-\infty,+\infty]$ is a closed proper convex function,
$\cH:\bX\to\bY$ is a linear map with its adjoint denoted by $\cH^*$,
$\bc\in\bX$ and $\bb\in\bY$ are given vectors, $\mathcal{C}\subseteq \bY$ and $\K\subseteq\bX$ are two closed convex cones.

By simple calculations, one can observe that the dual of problem (\ref{probc}) can be recast as
%\begin{equation}\label{dualprob}
%\begin{array}{ll}
%\ds\min_{\bs,\bz,\by} \ & \theta^*(-\bs)-\langle \bb,\by\rangle\\[1mm]
%\mbox{s.t.} & \bs+\bz+\cH^*\by=\bc,\\[1mm]
%            &\bz\in\K^*, \ \by\in\mathcal{C}^*,% \ \bs\in\X,
%\end{array}
%\end{equation}
\begin{equation}
\label{dual2}
\begin{array}{ll}
\ds\min_{\bs,\bz,\by} \ & \theta^*(-\bs)+\delta_{\K^*}(\bz)+\delta_{\mathcal{C}^*}(\by)-\langle \bb,\by\rangle\\[1mm]
\mbox{s.t.} & \bs+\bz+\cH^*\by=\bc,% \ s\in \X,
\end{array}
\end{equation}
where $\mathcal{C}^*$ and $\K^*$ are the dual cones of $\mathcal{C}$ and $\K$, respectively, $\theta^*$ is the Fenchel conjugate of $\theta$,
$\delta_{\mathcal{C}^*}(\cdot)$ and $\delta_{\K^*}(\cdot)$ are the indicator functions of $\mathcal{C}^*$ and $\K^*$, respectively. Let $\D:\bX\rightarrow\bX$ be a given nonsingular linear operator and $\D^*$ being its adjoint. Then, we can reformulate problem (\ref{dual2}) equivalently as
\begin{equation}\label{dual3}
\begin{array}{ll}
\ds\min_{\bs,\bz,\by,\bu} \ & \theta^*(-\bs)+\delta_{\K^*}(\bz)+\delta_{\mathcal{C}^*}(\bu)-\langle \bb,\by\rangle\\
\mbox{s.t.} & \bs+\bz+\cH^*\by=\bc,\\
            & \D^*(\bu-\by)=0.
\end{array}
\end{equation}
Obviously, problem \eqref{dual3} can be viewed (by setting $(\bs,\bu)$ as one block-variable) as an example of the following general multi-block convex composite problem
\begin{equation}
\label{gmodel}
\begin{array}{ll}
\min \ & f_1(y_1)+f_2(y_1,y_2,\ldots,y_p)+g_1(z_1)+g_2(z_1,z_2,\ldots,z_q)\\
\mbox{s.t.} &\A_1^*y_1+\A_2^*y_2+\ldots+\A_p^*y_p+\B_1^*z_1+\B_2^*z_2+\ldots+\B_q^*z_q=c,
\end{array}
\end{equation}
where $p$ and $q$ are two given nonnegative integers, $\X,\Y_1,\ldots,\Y_p,\Z_1,\ldots,\Z_q$ are finite-dimensional real Euclidean spaces each endowed with an inner product $\langle \cdot,\cdot\rangle$ and its induced norm $\|\cdot\|$, $c\in\X$ being the given data, $\A_i,i=1,\ldots,p,$ and $\B_j,j=1,\ldots,q,$ are linear maps from $\Y_i$ and $\Z_j$, respectively, to $\X$, $f_1:\Y_1\rightarrow(-\infty,+\infty]$ and $g_1:\Z_1\rightarrow(-\infty,+\infty]$ are simple closed proper convex functions, $f_2:\Y\rightarrow(-\infty,+\infty)$ and $g_2:\Z\rightarrow(-\infty,+\infty)$ are  convex quadratic functions, in which $\Y$ and $\Z$ are defined by $\Y:=\Y_1\times\Y_2\times\ldots\times\Y_p$ and $\Z:=\Z_1\times\Z_2\times\ldots\times\Z_q$.

For further simplicity, we define the linear maps $\A:\X\rightarrow\Y$ and $\B:\X\rightarrow\Z$ such that their adjoints  are given by
$$
\A^*y=\sum_{i=1}^p\A_i^*y_i \quad \forall y\in\Y, \quad\mbox{and}\quad \B^*z=\sum_{j=1}^q\B_j^*z_j \quad \forall z\in\Z.
$$
Then, the model (\ref{gmodel}) falls into the following more general framework
\begin{equation}\label{prob}
\min_{y\in\Y,z\in\Z}\Big\{f(y)+g(z)\quad\mbox{s.t.}\quad\A^* y+\B^* z=c\Big\},
\end{equation}
where $f:\Y \to(-\infty,+\infty]$ and $g:\Z\to(-\infty,+\infty]$
are two closed proper convex functions,
$\A:\X\rightarrow\Y$ and $\B:\X\rightarrow\Z$
are two linear maps with their adjoints $\A^*$ and $\B^*$, respectively,  and $c\in\X$ is a given vector.

Let $\sigma>0$ be the penalty parameter, the augmented Lagrangian function of problem \eqref{prob} is defined by, for any $(x,y,z)\in\X\times\Y\times\Z$,
\begin{equation}
\cL_{\sigma}(y,z;x):=f(y)+g(z)-\langle x,\A^*y+\B^{*}z-c\rangle+\frac{\sigma}{2}\|\A^*y+\B^*z-c\|^2.
\end{equation}
Let $\tau>0$ be the step-length and choose an initial point $(x^{0},y^{0},z^{0})\in\X\times(\dom\, f)\times(\dom\, g)$. The classic alternating direction method of multipliers (ADMM) scheme takes the following form, for $k=0,1,\ldots,$
\begin{equation}
\label{admm}
\left\{
\begin{array}{ll}
\displaystyle
y^{k+1}
\in \argmin_{y}
\cL_{\sigma}(y,z^{k};x^{k}),
\\[2mm]
\displaystyle
z^{k+1}
\in \argmin_{z}
\cL_{\sigma}(y^{k+1},z;x^{k}),
\\[2mm]
\displaystyle
x^{k+1}: =x^{k}-\tau\sigma\big(\A^{*}y^{k+1}+\B^{*}z^{k+1}-c\big).
\end{array}
\right.
\end{equation}
Generally, the step-length $\tau$ in \eqref{admm} can  be chosen in the interval of $(0,(1+\sqrt{5})/2)$, and the thumb of the choice for $\tau$ in
numerical computations is the golden ratio of $1.618$. Moreover, we should mention that the iteration scheme \eqref{admm} may not be well-defined, and one may refer to \cite{admmnote} for details.

The classic ADMM, which is closely related to the method of multipliers by Hestenes \cite{hestenes69}, Powell \cite{powell} and Rockafellar \cite{rockafellar} for solving constrained convex optimization problems, was originally proposed by Glowinski and Marroco \cite{GLOWINSKI75} and Gabay and Mercier \cite{GABAY76} in the mid-1970s.
Early proofs for the convergence of the classic ADMM under certain conditions were given by Gabay and Mercier \cite{GABAY76}, Glowinski \cite{GLOWINSKI80} and Fortin and Glowinski \cite{FORTIN83}. One may refer to \cite{globook14} for a note on the historical development of the ADMM.
In early-1980s, it was shown by Gabay \cite{GABAY83} that the classic ADMM with the unit step-length is a special case of the Douglas-Rachford splitting method applied to finding the roots of the sum of two maximal monotone operators.
In early-1990s, Eckstein and Bertsekas \cite{ECKSTEIN92} showed that the Douglas-Rachford splitting method itself is an instance of the proximal point algorithm \cite{roc76} applied to a specially generated splitting operator.
Based on this observation, and for the purpose of improving the performance of the classic ADMM with the unit step-length,
Eckstein and Bertsekas \cite{ECKSTEIN92} presented a generalized ADMM.
For a recent survey, one may refer to \cite{eck12}.

Along a different line, Eckstien \cite{ECKSTEINOMS} introduced proximal terms to the ADMM to ensure that each of the subproblems involved admits a unique solution, without further assumptions on the objective functions and the constraints.
Recently, a semi-proximal ADMM (sPADMM) with the step-length up to the golden ratio of $1.618$ was proposed by Fazel et al. \cite{SEMP13}.
This extension has attracted much attention in the last couple of years in solving multi-block linearly constrained linear and quadratic convex semidefinite programming problems \cite{CHENL,XDLIMP,lithesis,YANGSIAM,limajorize} to moderate accuracy.
The successful applications of the two-block ADMM with semi-proximal terms to solving various multi-block problems for moderate accuracy inevitably inspire us to consider
using generalized ADMM with semi-proximal terms to solve the convex composite conic programming (\ref{probc}).
Moreover, the advantage of using the semi-proximal terms instead of the positive definite proximal terms is that the resulting algorithm not only reduces to the original generalized ADMM when the proximal terms are absent but also is more convenient in solving convex composite conic programming if the block symmetric Gauss-Seidel (sGS) iteration techinique invented by Li, Sun and Toh \cite{lisgs} is adopted as in \cite{CHENL,XDLIMP,YANGSIAM}.

In this paper, we proposed a variant of generalized ADMM with semi-proximal terms, which can be used to solve problem \eqref{gmodel} if the block sGS technique is involved.
This variant is based on a crucial observation made by Chen \cite{CHENPHD} that the generalized ADMM of Eckstein and Bertsekas \cite{ECKSTEIN92} can be reformulated equivalently as an ADMM with an extra relaxation step on both the primal and the dual variables with the relaxation factor lying in the open interval of $(0,2)$.
Furthermore, the semi-proximal terms used in this paper is in a pretty natural way due to the fact that only the most recent values of variables are always in it.
To derive a more general theoretical convergence result, we particularly concentrate on the convergence analysis of the generalized ADMM with semi-proximal terms for solving the `two-block' problem (\ref{prob}).
Then, we illustrate how the proposed algorithm can be used to solve multi-block problems.
At last, we conduct numerical experiments on a class linear  doubly non-negative (DNN) semidefinite programming (SDP)
problems with or without inequality constraints. The corresponding numerical results illustrate that the proposed method, together with the sGS technique, can solve these problems not only effectively but also efficiently.

The remaining parts of this paper are organized as follows. In Section \ref{presult}, we provide some preliminary results, which will be used later, and give a quick review of some variants of the classic ADMM.
In Section \ref{spre2}, we present a generalized ADMM with semi-proximal terms for solving the $2$-block problem \eqref{prob}. Then, we discuss how to apply the proposed method, with properly chosen semi-proximal terms, to solving multi-block problems in Section \ref{appl}.
In Section \ref{glob}, we focus on the convergence analysis of the proposed generalized ADMM.
In Section \ref{numer}, we are devoted to the implementation issues and numerical experiments of using the proposed method for solving DNN-SDP.
The corresponding numerical results are provided.
We conclude this paper in Section \ref{result}.

%%%%%%%%%%%%%%%%%%%%%%%%%%%%%%%%%%%%%%%%%%%%%%%%%%%%%%%%%%%
\section{Preliminaries}\label{presult}
%%%%%%%%%%%%%%%%%%%%%%%%%%%%%%%%%%%%%%%%%%%%%%%%%%%%%%%%%%%%%%%%%%%%%%%
We adopt the notation that $w:=(x,y,z)\in\X\times\Y\times\Z$ for convenience.
Moreover, for any two finite-dimensional real Euclidean spaces $\X'$ and $\X''$ each endowed with an inner product $\langle\cdot,\cdot\rangle$, we take the convention that
$$
\langle (x'_1,x''_1),(x'_2,x''_2)\rangle:=
\langle x'_1,x'_2\rangle
+\langle x''_1,x''_2\rangle,
\quad
\forall x_1',x_2'\in\X', x_1'',x_2''\in\X'',
$$
and the corresponding norm in the product space induced by this inner product is also denoted by $\|\cdot\|$.
We use $\I$ to denote the identity linear operator, whose domain and range can be deduced from the context.

We denote the effective domains of $f$ and $g$ by $\dom\, f$ and $\dom\, g$, respectively, and use
$\partial f$ and $\partial g$ to denote their subdifferential mappings. It is well-known that $\partial f$ and $\partial g$ are maximal monotone operators \cite{rocbook} and there exist two self-adjoint positive semidefinite linear operators $\Sigma_f:\Y\to\Y$ and $\Sigma_g:\Z\to\Z$, such that for any
$y,y'\in\Y$ and $z,z'\in\Z$ with $u\in\partial f(y)$, $u'\in\partial f(y')$, $v\in\partial g(z)$, and $v'\in\partial g(z')$,
\begin{equation}
\label{ineqx}
\langle u-u',y-y'\rangle\geq \|y-y'\|_{\Sigma_f}^2
\quad
\mbox{and}
\quad
\langle v-v',z-z'\rangle\geq \|z-z'\|_{\Sigma_g}^2.
\end{equation}

The Lagrangian function of problem \eqref{prob} is defined by
$$
\cL(y,z;x):=f(y)+g(z)-\langle x,\A^*y+\B^{*}z-c\rangle, \ \forall (x,y,z)\in\X\times\Y\times\Z.
$$
A vector $( x, y, z)\in\X\times\Y\times\Z$ is said to be a saddle point to the Lagrangian function if it is a solution to the following Karush-Kuhn-Tucker (KKT) system
\begin{equation}\label{optsol}
\A{x}\in\partial f(y), \quad \B x\in\partial g(z)\quad\mbox{and} \quad \A^*y+\B^*z=c.
\end{equation}
Throughout this paper, we make the following assumption which ensures $(\bar y,\bar z)$ is an optimal solution to problem \eqref{prob} and $\bar x$ is an optimal solution to the dual of problem  \eqref{prob}.

\begin{assumption}
\label{assum1}
There exists a vector $(\bar x,\bar y,\bar z)\in\X\times(\dom\,f)\times(\dom\, g)$ satisfying the KKT system \eqref{optsol}.
\end{assumption}

Next, we give a quick review of several ADMM-type methods. In order to improve the performance of the classic ADMM with the step-length $\tau=1$, Eckstein and Bertsekas \cite{ECKSTEIN92} proposed the following generalized ADMM\footnote{In \cite{ECKSTEIN92}, the authors considered the case that $\B^{*}$ is an identity operator and $c=0$. However, it is easy to see that the scheme described in \eqref{gadmm} is a direct application of their idea  to problem \eqref{prob}. } scheme:
\begin{equation}
\label{gadmm}
\left\{
\begin{array}{ll}
y^{k+1}:&\displaystyle=\argmin_{y}
\cL_{\sigma}(y,z^{k};x^{k}),
\\[2mm]
z^{k+1}:&\displaystyle=\argmin_{z}\big\{g(z)-\langle z,\B x^k\rangle+\frac{\sigma}{2}\big\|\rho\A^*y^{k+1}-(1-\rho)\B^*z^k+\B^*z-\rho c\big\|^2\big\},
\\[3mm]
x^{k+1}:&\displaystyle=x^{k}-\sigma \big(\rho\A^*y^{k+1}-(1-\rho)\B^*z^k+\B^*z^{k+1}-\rho c\big),
\end{array}
\right.
\end{equation}
where $\rho\in(0,2)$ is the relaxation factor. For $\rho=1$, the above generalized ADMM scheme is exactly the classic ADMM scheme \eqref{gadmm} with $\tau=1$. In his Ph.D. thesis \cite[Section 3.2]{CHENPHD}, Chen has made the very interesting observation that, cyclically, the generalized ADMM scheme \eqref{gadmm} is equivalent to the following ADMM scheme with initial point $\tilde w^{0}=(\tilde x^{0},\tilde y^{0},\tilde z^{0})\in\X\times(\dom\, f)\times(\dom\, g)$,
\begin{equation}
\label{gadmm2}
\left\{
\begin{array}{l}\displaystyle
z^{k}:=\argmin_{z}
\cL_{\sigma}(\tilde y^{k},z; \tilde x^{k}),
\\[2mm]\displaystyle
x^{k}:=\tilde x^{k}-\sigma(\A^{*} \tilde y^{k}+B^{*} z^{k}-c),
\\[2mm]\displaystyle
y^{k}:=\argmin_{y}\cL_{\sigma}(y,z^{k}; x^{k}),
\\[2mm]\displaystyle
\tilde w^{k+1}:=\tilde w^{k}+\rho(w^{k}-\tilde w^{k}),
\end{array}\right.
\end{equation}
where $\rho\in(0,2)$ is the uniform relaxation factor.
For the subsequent discussion, we need to distinguish between the sequence $\{(x^{k},y^{k},z^{k})\}_{k\ge 0}$ generated by \eqref{gadmm} and the sequence $\{(x^{k},y^{k},z^{k})\}_{k\ge 0}$ generated by \eqref{gadmm2}. For this purpose, we use $\{(\hat x^{k},\hat y^{k},\hat z^{k})\}_{k\ge 0}$ as a surrogate for the sequence $\{(x^{k},y^{k},z^{k})\}_{k\ge 0}$ generated by \eqref{gadmm2}. The equivalence between \eqref{gadmm} and \eqref{gadmm2} is interpreted as follows. Its proof can be found in Chen \cite[Section 3.2]{CHENPHD}.

\begin{proposition}
If the initial point $(x^{0},y^{0},z^{0})\in\X\times(\dom\, f)\times(\dom\, g)$ for the generalized ADMM scheme \eqref{gadmm} satisfies $x^{0}=\hat x^{0}$ and $z^{0}=\hat z^{0}\in\dom\, g$, the sequence $\{(x^{k+1},y^{k+1},z^{k+1})\}_{k\ge 0}$ generated by the generalized ADMM scheme \eqref{gadmm} is exactly the sequence $\{(\hat x^{k+1},\hat y^{k},\hat z^{k+1})\}_{k\ge 0}$ generated by the generalized ADMM scheme \eqref{gadmm2}.
\end{proposition}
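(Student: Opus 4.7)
The plan is to prove the equivalence by induction on $k \geq 0$ with inductive hypothesis $x^k = \hat x^k$, $z^k = \hat z^k$, and $y^{k+1} = \hat y^k$. At the base case $k=0$, the initial compatibility $x^0 = \hat x^0$, $z^0 = \hat z^0$ directly gives $y^1 = \hat y^0$: the $y$-subproblem of \eqref{gadmm} at iteration $0$ and the third equation of \eqref{gadmm2} at iteration $0$ reduce to literally the same minimization, $\argmin_y \cL_\sigma(y,\hat z^0;\hat x^0)$.

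For the inductive step the main calculation is to express the relaxed multiplier $\tilde x^{k+1}$ and relaxed primal block $\tilde y^{k+1}$ of \eqref{gadmm2} in terms of the iterates of \eqref{gadmm}. Starting from $\tilde x^{k+1} = (1-\rho)\tilde x^k + \rho \hat x^k$ and $\tilde y^{k+1} = (1-\rho)\tilde y^k + \rho \hat y^k$, and using the $x$-update $\hat x^k = \tilde x^k - \sigma(\A^* \tilde y^k + \B^* \hat z^k - c)$ to eliminate the unobservable combination $\tilde x^k - \sigma \A^* \tilde y^k$, a direct expansion should yield the identity
\begin{equation*}
\tilde x^{k+1} - \sigma\big(\A^* \tilde y^{k+1} + \B^* z - c\big) = \hat x^k - \sigma\big[\rho \A^* \hat y^k - (1-\rho)\B^* \hat z^k + \B^* z - \rho c\big] \quad \forall\, z.
\end{equation*}
Invoking the inductive hypothesis, the right-hand side equals $x^k - \sigma\big[\rho \A^* y^{k+1} - (1-\rho)\B^* z^k + \B^* z - \rho c\big]$. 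Consequently, the $z$-subproblem defining $\hat z^{k+1}$ in \eqref{gadmm2} and the $z$-subproblem defining $z^{k+1}$ in \eqref{gadmm} have objectives that coincide up to an additive constant in $z$, hence share the same argmin; this gives $z^{k+1} = \hat z^{k+1}$. Applying the same identity at $z = z^{k+1} = \hat z^{k+1}$ to the definition $\hat x^{k+1} = \tilde x^{k+1} - \sigma(\A^* \tilde y^{k+1} + \B^* \hat z^{k+1} - c)$ matches $\hat x^{k+1}$ with the multiplier update in \eqref{gadmm}, yielding $x^{k+1} = \hat x^{k+1}$. The matching $y^{k+2} = \hat y^{k+1}$ then follows verbatim from the base-case argument applied with shifted indices, closing the induction.

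The anticipated obstacle is purely bookkeeping: carefully tracking the index shift $y^{k+1} \leftrightarrow \hat y^k$, substituting the relaxation update into the formulas for $\hat z^{k+1}$ and $\hat x^{k+1}$, and verifying that the constant terms involving $c$ combine with the $\rho$-weighted convex combinations to reproduce exactly the coefficients $\rho$, $1-\rho$, and $-\rho c$ appearing in \eqref{gadmm}. No uniqueness or strong convexity argument is required: the equivalence is obtained at the level of the defining objectives of the subproblems, which reduces the entire proof to one algebraic identity plus its application to the two places where the tilde multiplier enters.
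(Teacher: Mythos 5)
Your proof is correct. The paper does not actually prove this proposition itself---it defers to Chen's thesis \cite[Section 3.2]{CHENPHD}---so there is no in-paper argument to compare against; your induction is the natural one and every step checks out. In particular, the central identity is verified by eliminating $\tilde x^{k}-\sigma\A^{*}\tilde y^{k}=\hat x^{k}+\sigma(\B^{*}\hat z^{k}-c)$ from $\tilde x^{k+1}-\sigma(\A^{*}\tilde y^{k+1}+\B^{*}z-c)$ using $\tilde w^{k+1}=(1-\rho)\tilde w^{k}+\rho\hat w^{k}$: the constant terms combine as $-\sigma(1-\rho)c+\sigma c=\sigma\rho c$, reproducing exactly the $-\rho c$ coefficient in \eqref{gadmm}. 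This identity makes the two $z$-subproblems have pointwise identical optimality conditions in $z$ (equivalently, objectives differing by an additive constant, as you assert, once one also notes that the linear term $-\langle\tilde x^{k+1},\B^{*}z\rangle$ and the cross term from expanding the square recombine into $-\langle x^{k},\B^{*}z\rangle$), and evaluating the identity at $z=z^{k+1}$ matches the multiplier updates. The only implicit assumption, inherited from the proposition as stated, is that each subproblem admits a unique minimizer so that the generated sequences are well defined; strictly speaking your argument establishes equality of the argmin sets.
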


The proximal method of multipliers of Rockafellar \cite{rockafellar,ROC78} for solving (\ref{prob}) takes the iterative
scheme
\begin{equation}\label{palmxy}
\displaystyle
(y^{k+1},z^{k+1})=\argmin\limits_{y\in\Y,z\in\Z}\cL_{\sigma}(y,z;x^k)+\frac{1}{2}(\|y-y^k\|_{\cS}^2+\|z-z^k\|_{\T}^2)
\end{equation}
where $\cS$  and $\T$ are the identity operators multiplied by  a positive scalar $\mu$. The proximal method of multipliers is globally convergent under very mild assumptions on the problem's data, namely, convexity and existence of optimal solutions are enough to guarantee convergence. To preserve the good features of both proximal method of multipliers (\ref{palmxy}) and ADMM (\ref{admm}), Eckstein \cite{ECKSTEINOMS} firstly constructed a proximal ADMM, which iterates from the given point $(y^k,z^k;x^k)\in\Y\times\Z\times\X$ in the form of
\begin{subnumcases}{\label{padmm}}
y^{k+1}=\argmin_{y\in\Y}\cL_{\sigma}(y,z^k;x^k)+\frac{1}{2}\|y-y^k\|_{\cS}^2,\label{padmmx}\\
z^{k+1}=\argmin_{z\in\Z}\cL_{\sigma}(y^{k+1},z;x^k)+\frac{1}{2}\|z-z^k\|_{\T}^2,\label{padmmy}\\
x^{k+1}=x^k-\tau\sigma(\A^* y^{k+1}+\B^* z^{k+1}-c),\label{padmml}
\end{subnumcases}
where $\tau=1$, $\cS=\mu_1\I$ and $\T=\mu_2\I$ with $\mu_1>0$ and $\mu_2>0$.

It should be emphasised that the above mentioned approaches require uniformly the linear operators $\cS$ and $\T$ to either be  zero, or positive definite. However, the restriction on the positive definiteness will exclude some interesting situations such as the classic ADMM.
Moreover, the guiding principle is that $\cS$ and $\T$ should be as `small' as possible while (\ref{padmmx}) and (\ref{padmmy}) are still relatively easy to compute.
The exciting progress in designing convergent and efficient (proximal) ADMM in which $\cS$ and $\T$ are required only to be self-adjoint positive semidefinite  owning to Fazel et al. \cite{SEMP13}. This algorithm framework, which was called smi-proximal ADMM (and was abbreviated as sPADMM) later,  takes the form of \eqref{padmmx} to \eqref{padmml} with $\tau\in(0,\frac{1+\sqrt{5}}{2})$. With properly `chosen' $\cS$ and $\T$, sPADMM has exhibited as a powerful algorithmic tool in solving multi-block linearly constrained linear and quadratic convex semidefinite programming problems \cite{CHENL,XDLIMP,lithesis,YANGSIAM,limajorize} to moderate accuracy.

%%%%%%%%%%%%%%%%%%%%%%%%%%%%%%%%%%%%%%%%%%%%%%%%%%%%%%%%%%%%%%%%%%
\section{A generalized ADMM with Semi-Proximal Terms}
\label{spre2}
%%%%%%%%%%%%%%%%%%%%%%%%%%%%%%%%%%%%%%%%%%%%%%%%%%%%%%%%%%%%%%%%%%

Observe that the subproblems in the generalized ADMM schemes \eqref{gadmm} and \eqref{gadmm2} may not admit solutions
because $\A$ or $\B$ is not assumed to be surjective, which is always the case for multi-block convex conic programming problems considered in \cite{CHENL,XDLIMP,lithesis,YANGSIAM,limajorize}.
One natural way to fix this problem is to add  proximal terms to these subproblems. For this purpose and inspired by the sPADMM, we first choose $\cS:\Y\to\Y$ and $\T:\Z\to\Z$ be two self-adjoint positive semidefinite linear operators.
Note that although the generalized ADMM schemes \eqref{gadmm} and \eqref{gadmm2} are equivalent in certain sense, one can get two different variants of the generalized ADMM with semi-proximal terms.
On one hand, we may add the semi-proximal terms $\frac{1}{2}\|y-y^{k}\|_{\cS}$ and $\frac{1}{2}\|z-z^{k}\|_{\T}^{2}$ to the subproblems for computing $y^{k+1}$ and $z^{k+1}$ in the generalized ADMM scheme \eqref{gadmm}. On the other hand, for the generalized ADMM scheme \eqref{gadmm2}, the more natural choice for the semi-proximal terms to be added will be $\frac{1}{2}\|y-\tilde y^{k}\|_{\cS}$ and $\frac{1}{2}\|z-\tilde z^{k}\|_{\T}^{2}$ to the subproblems for computing  $z^{k}$ and $y^{k}$.
In fact, the former one can be reformulated, in light of \eqref{gadmm2}, as
\begin{equation}
\label{gadmm-gu}
\left\{
\begin{array}{rl}
z^{k}:&\displaystyle=\argmin_{z}
\cL_{\sigma}(\tilde y^{k},z; \tilde x^{k})+\frac{1}{2}\|z-z^{k-1}\|_{\T}^{2},
\\[2mm]
x^{k}:&\displaystyle=\tilde x^{k}-\sigma(\A^{*} \tilde y^{k}+B^{*} z^{k}-c),
\\[2mm]
y^{k}:&\displaystyle=\argmin_{y}\cL_{\sigma}(y,z^{k}; x^{k})+\frac{1}{2}\|y-y^{k-1}\|_{\cS}^2,
\\[2mm]
\tilde w^{k+1}:&\displaystyle=\tilde w^{k}+\rho(w^{k}-\tilde w^{k}),
\end{array}\right.
\end{equation}
where $z^{-1}:=\tilde z^{0}$ and $y^{-1}:=\tilde y^{0}$.
In this paper, for the sake of generality and numerical convenience, we consider the latter variant with only semi-proximal terms.
More preciously, we develop the following generalized ADMM  with semi-proximal terms  (abbr. GADMM) in which the most recent values of the variables are used in the proximal terms.
%More specifically, we shall prove the  convergence of the following generalized ADMM\ with semi-proxial terms under the conditions given by Fazel et al. \cite{SEMP13}:

\medskip
\noindent
\begin{minipage}{\textwidth}
\hrule \vskip 1mm
{\renewcommand{\baselinestretch}{1.0}
\begin{algorithm}[GADMM:]
\label{alg1}
{ A generalized ADMM with semi-proximal terms}
\vskip 1.5mm \hrule \vskip 1mm
{\bf Initialization.}
Set $\rho\in(0,2)$ and $\sigma>0$. Choose $\cS:\Y\to\Y$ and $\T:\Z\to\Z$ such that $\Sigma_f+\cS+\A\A^*\succ 0$ and $\Sigma_g+\T+\B\B^*\succ 0$.
Choose an initial point $(\tilde x^{0},\tilde y^{0},\tilde z^{0})\in\X\times\dom(f)\times\dom(g)$. For $k=0,1,\ldots$,
 \\
{\bf Step 1 (main step).} Compute
%%%%%%%%%%%%%%%%$ y
\begin{subnumcases}{\label{pre}}
y^{k}:=\argmin_{y}
\cL_{\sigma}(y,\tilde z^{k};\tilde x^{k})
+\frac{1}{2}\|y-\tilde y^{k}\|^{2}_{\cS},
\label{prex}\\
%%%%%%%%%%%%%%%%% x
x^{k}:=\tilde x^{k}
-\sigma(\A^{*} y^{k}+\B^{*}\tilde z^{k}-c),
\label{prel}\\
%%%%%%%%%%%%%%%%% z
z^{k}:=\argmin_{z}
\cL_{\sigma}(y^{k},z; x^{k})
+\frac{1}{2}\|z-\tilde z^{k}\|^{2}_{\T}
.\label{prey}
\end{subnumcases}

{\bf Step 2 (relaxation step).}  Compute
\begin{equation}\label{relax}
\tilde w^{k+1}:=\tilde w^{k}+\rho(w^{k}-\tilde w^{k}).
\end{equation}
\end{algorithm}
\vskip -3mm }

\medskip
 \hrule
  \vskip 1 mm
 {\renewcommand{\baselinestretch}{1.0} \small
 {\quad} }
 \end{minipage}
\begin{remark}
The order of variable updating in Algorithm \ref{alg1} is different from that used in \eqref{gadmm2}. This swapping is just for the convenience of convergence analysis and essentially does not affect anything else.

\end{remark}
\begin{remark}
Compared with the scheme \eqref{gadmm-gu}, the semi-proximal terms in \eqref{prex} and \eqref{prey} are more natural in the sense that the most recently updated values of variables are involved.
\end{remark}

\begin{remark}
The main objective of this paper is to solve problem (\ref{gmodel}), where $f_2$ and $g_2$ are convex quadratic functions. Therefore, the majorization technique of Li et al. \cite{limajorize}, which can be used together with indefinite $\cS$ and $\T$, is not necessary in this paper. On the other hand, we are more interested in the techniques which can be used to solve multi-block problems. As a result, we only need $\cS$ and $\T$ being positive semidefinite, despite the fact that they can be chosen as indefinite for certain cases. In fact, even $f_2$ and $g_2$ in (\ref{gmodel}) are not separable, a slight modification of our proposed algorithm is applicable by using the techniques in \cite{CUIST}. We keep these techniques away from this paper just for the ease of reading, although the convergence analysis for a unified algorithm with all of these techniques contained would not bring too much work.
\end{remark}

%%%%%%%%%%%%%%%%%%%%%%%%%%%%%%%%%%%%%%%%%%%%%%%%%%%%%%
\section{Applications to Multi-Block Problems}\label{appl}
%%%%%%%%%%%%%%%%%%%%%%%%%%%%%%%%%%%%%%%%%%%%%%%%%%%%%%%%%%%%%%%%%%%%%%%%%%%%%%%%%%%%
In this part, we discuss how to apply the GADMM, i.e, Algorithm \ref{alg1} to solve multi-block separable convex minimization problems.
For the further discussions, we use the notations that $y_{< i}:=(y_1,\ldots,y_{i-1})$, $y_{>i}:=(y_{i+1},\ldots,y_p)$ for $y\in\Y$ and similar notations for $z\in\Z$.

\subsection{The Block Symmetric Gauss-Sidel Iteration Case}
 Firstly, we consider the general convex composite quadratic optimization model \eqref{gmodel}. %, which contains at most two nonsmooth terms in the objective function.:
%\begin{equation}\label{probq}
%\min_{y\in\Y,z\in\Z}\Big\{f_1(y_1)+f_2(y)+g_1(z_1)+g_2(y)\quad\mbox{s.t.}\quad\A^* y+\B^* z=c\Big\},
%\end{equation}
%where $f_1:\Y_1\rightarrow(-\infty,+\infty]$ and $g_1:\Z_1\rightarrow(-\infty,+\infty]$ are closed proper convex functions, and
%$f_2:\Y\rightarrow(-\infty,+\infty)$ and $g_2:\Z\rightarrow(-\infty,+\infty)$ are convex quadratic functions whose
We denote the Hessians of the quadratic functions $f_2$ and $g_2$ by  $\cP$ and $\Q$, respectively,  which are  self-adjoint positive semidefinite  linear operators defined on $\Y$ and $\Z$. According the block sturcture of the variable $y$ and $z$, we can further write $\cP$ and $\Q$ as
$$
\cP=\left(
  \begin{array}{cccc}
    \mathcal{P}_{11}         & \mathcal{P}_{12}     &   \cdots     &\mathcal{P}_{1p}\\
    \mathcal{P}^*_{12}       &\mathcal{P}_{22}      &  \cdots      &\mathcal{P}_{2p} \\
      \vdots                 & \vdots               &  \ddots            &\vdots\\
      \mathcal{P}^*_{1p}     &\mathcal{P}^*_{2p}      &  \cdots      &\mathcal{P}_{pp}
  \end{array}
\right)
\quad
\mbox{and}
\quad
\Q=\left(
  \begin{array}{cccc}
    \mathcal{Q}_{11}         & \mathcal{Q}_{12}     &   \cdots     &\mathcal{Q}_{1q}\\
    \mathcal{Q}^*_{12}       &\mathcal{Q}_{22}      &  \cdots      &\mathcal{Q}_{2q} \\
      \vdots                 & \vdots               &  \ddots            &\vdots\\
      \mathcal{Q}^*_{1q}     &\mathcal{Q}^*_{2q}      &  \cdots      &\mathcal{Q}_{qq}
  \end{array}
\right).
$$
%Let $\sigma>0$ be given. The augmented Lagrangian function of (\ref{probq}) is defined as: for any
%$(x,y,z)\in\X\times\Y\times \Z$,
%$$
%\cL_{\sigma}(y,z;x):=f_1(y_1)+f_2(y)+g_1(z_1)+g_2(z)-\langle x,\A^*y+\B^{*}z-c\rangle+\frac{\sigma}{2}\|\A^*y+\B^*z-c\|^2.
%$$
Moreover, we choose $\E_{i}$ and $\cH_{j}$  to be self-adjoint positive semidefinite linear operators on $\Y_i$ and $\Z_j$ for $i=1,\ldots,p$ and $j=1,\ldots,q$, respectively, such that
$$
\E_{i}+\sigma^{-1}\cP_{ii}+\A_i\A_i^*\succ 0,\quad\mbox{and}\quad\cH_{j}+\sigma^{-1}\Q_{jj}+\B_j\B_j^*\succ 0.
$$
By adopting the smart symmetric Gauss-Seidel (sGS) technique invented by Li et al. \cite{XDLIMP}, the main step (Step $1$) of Algorithm \ref{alg1} can be implemented step by step according to the following procedures.

\medskip
\noindent
\begin{minipage}{\textwidth}
\hrule \vskip 1mm
{\renewcommand{\baselinestretch}{1.0}
\begin{algorithm}\label{alg2}
{The main step of Algorithm \ref{alg1} based on block sGS}
\vskip 1.5mm \hrule \vskip 1mm
{\bf Step 1.1} Compute
\begin{equation}
\left\{
\begin{array}{l}\displaystyle
y^{k-\frac{1}{2}}_i:=\argmin_{y_i} \cL_{\sigma}((\tilde y^{k}_{< i},y_i,y^{k-\frac{1}{2}}_{> i}),\tilde z^{k};\tilde x^{k})+\frac{\sigma}{2}\|y_i-\tilde y_i^k\|_{\E_i}^2,\  i=p,\ldots,2,\\\displaystyle
y^{k}_1:=\argmin_{y_1} \cL_{\sigma}((y_1,y^{k-\frac{1}{2}}_{>1}),\tilde z^{k};\tilde x^{k})+\frac{\sigma}{2}\|y_1-\tilde y_1^k\|_{\E_1}^2,\\\displaystyle
y^{k}_i:=\argmin_{y_i} \cL_{\sigma}((y^{k}_{< i},y_i,y^{k-\frac{1}{2}}_{> i}),\tilde z^{k};\tilde x^{k})+\frac{\sigma}{2}\|y_i-\tilde y_i^k\|_{\E_i}^2, \  i=2,\ldots,p.
\end{array}
\right.
\label{sgsy}
\end{equation}
{\bf Step 1.2} Compute
\begin{equation}
\label{sgs-x}
x^{k}:=\tilde x^{k} -\sigma(\A^{*} y^{k}+\B^{*}\tilde z^{k}-c).
\end{equation}
{\bf Step 1.3} Compute
\begin{equation}
\label{sgsz}
\left\{
\begin{array}{l}\displaystyle
z^{k-\frac{1}{2}}_j:=\argmin_{z_j} \cL_{\sigma}(y^{k},(\tilde z^{k}_{< j},z_j,z^{k-\frac{1}{2}}_{> j}); x^{k})+\frac{\sigma}{2}\|z_j-\tilde z_j^k\|^2_{\cH_{j}},\  j=q,\ldots,2,\\ \displaystyle
z^{k}_1:=\argmin_{z_1} \cL_{\sigma}(y^{k},(z_1,z^{k-\frac{1}{2}}_{>1}); x^{k})+\frac{\sigma}{2}\|z_1-\tilde z_1^k\|_{\cH_1}^2,\\ \displaystyle
z^{k}_j:=\argmin_{z_j} \cL_{\sigma}(y^{k},(z^{k}_{< j},z_j,z^{k-\frac{1}{2}}_{> j}); x^{k})+\frac{\sigma}{2}\|z_j-\tilde z_j^k\|^2_{\cH_{j}},\  j=2,\ldots,q.
\end{array}
\right.
\end{equation}
\end{algorithm}
\vskip -3mm }

\medskip
 \hrule
  \vskip 1 mm
 {\renewcommand{\baselinestretch}{1.0} \small
 {\quad} }
 \end{minipage}

\medskip
Next, denote $\D_y:=\diag(\E_{1}+\sigma^{-1}\cP_{11}+\A_1\A_1^*,\ldots,\E_{p}+\sigma^{-1}\cP_{pp}+\A_p\A_p^*)$ and $\D_z:=\diag(\cH_{1}+\sigma^{-1}\Q_{11}+\B_1\B_1^*,\ldots,\cH_{q}+\sigma^{-1}\Q_{qq}+\B_q\B_q^*)$. Furthermore, define the following linear operators:
$$
\M_y:=\left(
  \begin{array}{cccc}
    0        &\sigma^{-1}\cP_{12}+\A_1\A_2^*     &\cdots    &\sigma^{-1}\cP_{1p}+\A_1\A_p^*\\[3mm]
             &\ddots                            &\cdots    &\vdots\\[3mm]  %µÚ¶þÐÐÔªËØ
             &                                  &0         &\sigma^{-1}\cP_{(p-1)p}+\A_{p-1}\A_p^*\\[3mm]
             &                                  &          & 0
  \end{array}
\right)
$$
and
$$
\M_z:=\left(
  \begin{array}{cccc}
    0        &\sigma^{-1}\Q_{12}+\B_1\B_2^*     &\cdots    &\sigma^{-1}\Q_{1q}+\B_1\B_q^*\\[3mm]
             &\ddots                            &\cdots    &\vdots\\[3mm]
             &                                  &0         &\sigma^{-1}\Q_{(q-1)q}+\B_{q-1}\B_q^*\\[3mm]
             &                                  &          & 0
  \end{array}
\right).
$$
From  Li et al. \cite{XDLIMP}, it is not hard to see that the point $(z^k,y^k,x^k)$ produced by the precedure (\ref{sgsy}), \eqref{sgs-x} and (\ref{sgsz}) is exactly that generated by (\ref{prex}) to (\ref{prey}) with the two positive semidefinite linear operators
$$
\cS=\sigma(\M_y\D_y^{-1}\M_y^*+\E)\quad\mbox{and}\quad \T=\sigma(\M_z\D_z^{-1}\M_z^*+\cH),
$$
where $\E=\diag(\E_1,\ldots,\E_p)$, and $\cH=\diag(\cH_1,\ldots,\cH_q)$.

\subsection{The Full Jacobian Iteration Case}
Now we turn our attention to a much general case that there are at least three nonsmooth blocks in the objective function of problem (\ref{prob}), i.e.,
$$
f(y)=\sum_{i=1}^{p}f_i(y_i), \quad\mbox{and}\quad g(z)=\sum_{j=1}^{q}g_j(z_j),
$$
where $f_i:\Y_i \rightarrow(-\infty,+\infty]$ and $g_j:\Z_j \rightarrow(-\infty,+\infty]$ are closed proper convex (probably nonsmooth) functions.
The convergence of the multi-block ADMM in a Gauss-Seidel manner for solving such problems under certain further conditions has been considered, e.g. \cite{MASIAMOPT,MACOAP}, but we are interested in removing these conditions, e.g., strong convexity, continuously differentiable with Lipschitz continuous gradients etc..
 Let $\tau_1$ and $\tau_2$ be two given numbers such that $\tau_1\geq p-1$ and $ \tau_2\geq q-1$.
One can choose $\E_{i}$ and $\cH_{j}$  to be self-adjoint positive semidefinite linear operators on $\Y_i$ and $\Z_j$ for $i=1,\ldots,p$ and $j=1,\ldots,q$, respectively, such that for any $1\le i\le p$ and $1\le j\le q$,
$$
\E_{i}\succeq \tau_1\A_i\A_i^*,\  \Sigma_{f_i}+\E_i+\A_i\A_i^*\succ 0\quad\mbox{and}\quad \cH_{j}\succeq \tau_2\B_j\B_j^*,\  \Sigma_{g_j}+\cH_j+\B_j\B_j^*\succ 0.
$$
Define $\E:=\diag(\E_1,\ldots\E_p)$ and $\cH:=\diag(\cH_1,\ldots\cH_q)$ being two block-diagonal operators and let
\begin{equation}\label{sstt}
\cS:=\sigma[(1+\tau_1)\E-\A\A^*] \quad\mbox{and}\quad
\T:=\sigma[(1+\tau_2)-\B\B^*].
\end{equation}
It is easy to verify that $\cS\succeq 0$ and $\T\succeq 0$, while it holds that $\Sigma_f+\A\A^*+\E\succ 0$ and $\Sigma_g+\B\B^*+\cH\succ 0$.

Now we consider the following procedure that generates $\omega^k=(y^k,z^k,x^k)$ and $\tilde\omega^{k+1}$ in a parallel manner from the given point $\tilde\omega^k=(\tilde y^k,\tilde z^k,\tilde x^k)$, i.e.,
\[\left\{
\begin{array}{l}\displaystyle
y^k_i=\argmin_{x_i\in\X_i}\cL_{\sigma}\big((\tilde y^k_{<i},y_i,\tilde y^k_{>i}),\tilde z^k;\tilde x^k\big)+\frac{\sigma}{2}\|y_i-\tilde y_i^k\|_{\E_i}^2,\quad 1\le i\le p,\\[2mm]\displaystyle
x^k=\tilde x^k-\sigma(\A^*y^k+\B^*\tilde z^k-c),\\[2mm]\displaystyle
z^k_j=\argmin_{z\in\Z_j}\cL_{\sigma}\big(y^k,(\tilde z^k_{<j},z_i,\tilde z^k_{>j});x^k\big)+\frac{\sigma}{2}\|z_j-\tilde z^k_j\|_{\cH_j}^2,\quad 1\le j\le q,\\[2mm]\displaystyle
\tilde \omega^{k+1}=\tilde \omega^k+\rho(\omega^k-\tilde \omega^k).
\end{array}
\right.\]
It is straightforward to deduce that the above iterative scheme is exactly an instance of the Algorithm \ref{alg1} with $\cS$ and $\T$ defined by (\ref{sstt}).

%%%%%%%%%%%%%%%%%%%%%%%%%%%%%%%%%%%%%%%%%%%%%%%%%%%%%%
\section{Convergence Analysis}
\label{glob}
%%%%%%%%%%%%%%%%%%%%%%%%%%%%%%%%%%%%%%%%%%%%%%%%%%%%%%%%%%%%%
In this section, we analyze the convergence of Algorithm \ref{alg1} step by step.
For this purpose, we need the following two basic equalities:
\begin{equation}
\label{ele1}
2\langle u_1-u_2,\mathcal{G}(v_1-v_2)\rangle =\|u_1-v_2\|^2_\mathcal{G}-\|u_1-v_1\|^2_\mathcal{G}+\|u_2-v_1\|^2_\mathcal{G}-\|u_2-v_2\|^2_\mathcal{G},
\end{equation}
and
\begin{equation}
\label{ele2}
2\langle u_1,\mathcal{G} u_2\rangle=\|u_1\|^2_\mathcal{G}+\|u_2\|^2_\mathcal{G}-\|u_1-u_2\|^2_\mathcal{G}=\|u_1+u_2\|^2_\mathcal{G}-\|u_1\|^2_\mathcal{G}-\|u_2\|^2_\mathcal{G},
\end{equation}
where $u_1$, $u_2$, $v_1$, and $v_2$ are vectors in the same finite dimensional real Euclidean space endowed with inner product $\langle\cdot,\cdot\rangle$ and induced norm $\|\cdot\|$, and $\mathcal{G}$ is an arbitrary self-adjoint positive semidefinite linear operator from that space to itself.

Next, let $(\bar x,\bar y,\bar z)\in\X\times\Y\times\Z$ be an arbitrary solution to the KKT system \eqref{optsol}. For any $(x,y,z)\in\X\times\Y\times\Z$ we denote $x_{e}=x-\bar x$, $y_{e}=y-\bar y$ and $z_{e}=z-\bar z$. We first have the following result.
\begin{lemma}
\label{lemma1}
Let $(\bar x,\bar y,\bar z)$ be a solution to the KKT system \eqref{optsol} and $\{(x^{k},y^k,z^k)\}$ be the sequence generated by Algorithm \ref{alg1}. For any $k\ge 0$ it holds that
\begin{equation}
\label{rlem1}
\langle x^{k+1}-x^k,\A^*(y^{k+1}-y^k)\rangle
-\frac{\rho}{2}\|y^{k+1}-\tilde{y}^{k+1}\|^2_\cS+\frac{\rho}{2}\|y^k-\tilde{y}^k\|^2_\cS\ge\|y^{k+1}-y^k\|_{\Sigma_f}^2,
\end{equation}
\medskip
\begin{equation}\label{firstterm}
\begin{array}{ll}
\frac{\sigma\rho}{2}\|\A^*y^{k+1}_e+\B^*z^k_e\|^2+ \langle x^{k+1}_e+\sigma(1-\rho)\A^*y^{k+1}_e,\A^*y^{k+1}_e+\B^*z^k_e\rangle
\\[2mm]
\ =-\frac{1}{2\sigma\rho}\Big[\|x^{k+1}_e+\sigma(1-\rho)\A^*y^{k+1}_e\|^2
-\|x^k_e+\sigma(1-\rho)\A^*y^k_e\|^2\Big]
\end{array}
\end{equation}
and
\begin{equation}\label{345term}
\begin{array}{lll}
\sigma(1-\rho)\langle\B^*z^k_e,\A^*y^{k+1}_e+\B^*z^k_e\rangle
 +\langle x^k-x^{k+1},\B^*z^k_e\rangle
-\sigma\langle\A^*y^k+\B^*z^k-c,\B^*z^k_e\rangle
\\[2mm]
\leq
\frac{2-\rho}{\rho}\Big[-\frac{\rho}{2}\|y^{k+1}-\tilde{y}^{k+1}\|^2_\cS+\frac{\rho}{2}\|y^k-\tilde{y}^k\|^2_\cS- \|y^{k+1}-y^k\|_{\Sigma_f}^2\\[2mm]
\qquad\qquad+\frac{\sigma\rho}{2}\|\A^*y^k_e\|^2-\frac{\sigma\rho}{2}\|\A^*y^{k+1}_e\|^2
-\frac{\sigma(2-\rho)}{2}\|\A^*y^k_e-\A^*y^{k+1}_e\|^2\Big].
\end{array}
\end{equation}

\end{lemma}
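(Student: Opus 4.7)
My plan is to address the three assertions in sequence, noting that \eqref{rlem1} is a monotonicity inequality, \eqref{firstterm} is a pure algebraic identity derived from the update rules, and \eqref{345term} follows by combining the first two with elementary manipulations. It will be convenient to abbreviate $\Delta_k:=y^k-\tilde y^k$, so that the relaxation step \eqref{relax} yields the identity $y^{k+1}-y^k=\Delta_{k+1}+(\rho-1)\Delta_k$, which I will use repeatedly. I will also exploit the dual feasibility $\A^*\bar y+\B^*\bar z=c$ to write $\A^*y^k+\B^*z^k-c=\A^*y^k_e+\B^*z^k_e$, and similar equalities.

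For \eqref{rlem1}, I would extract the subgradient inclusion from the optimality condition of \eqref{prex} and use \eqref{prel} to eliminate $\tilde x^k$, obtaining $\A x^k-\cS(y^k-\tilde y^k)\in\partial f(y^k)$, and similarly at step $k+1$. Applying the monotonicity bound \eqref{ineqx} for $\partial f$ with modulus $\Sigma_f$ gives
\[\langle x^{k+1}-x^k,\A^*(y^{k+1}-y^k)\rangle-\|y^{k+1}-y^k\|^2_{\Sigma_f}\ge\langle\cS(\Delta_{k+1}-\Delta_k),y^{k+1}-y^k\rangle.\]
Substituting $y^{k+1}-y^k=\Delta_{k+1}+(\rho-1)\Delta_k$ on the right and performing a short calculation via \eqref{ele2}, the right-hand side becomes $\tfrac{\rho}{2}(\|\Delta_{k+1}\|^2_\cS-\|\Delta_k\|^2_\cS)+\tfrac{2-\rho}{2}\|\Delta_{k+1}-\Delta_k\|^2_\cS$; discarding the nonnegative last term (using $\rho\in(0,2)$ and $\cS\succeq 0$) yields \eqref{rlem1}.

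For \eqref{firstterm}, I would combine \eqref{prel} at indices $k$ and $k+1$ with the relaxation identities $\tilde x^{k+1}=\tilde x^k+\rho(x^k-\tilde x^k)$ and $\tilde z^{k+1}=\tilde z^k+\rho(z^k-\tilde z^k)$ to derive the compact relation
\[\bigl[x^{k+1}_e+\sigma(1-\rho)\A^*y^{k+1}_e\bigr]-\bigl[x^k_e+\sigma(1-\rho)\A^*y^k_e\bigr]=-\sigma\rho(\A^*y^{k+1}_e+\B^*z^k_e).\]
Abbreviating $\xi_k:=x^k_e+\sigma(1-\rho)\A^*y^k_e$ and $\eta:=\A^*y^{k+1}_e+\B^*z^k_e$, the claim reduces to $\tfrac{\sigma\rho}{2}\|\eta\|^2+\langle\xi_{k+1},\eta\rangle=-\tfrac{1}{2\sigma\rho}(\|\xi_{k+1}\|^2-\|\xi_k\|^2)$, which is a direct application of \eqref{ele2} to the relation $\xi_{k+1}-\xi_k=-\sigma\rho\eta$.

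For \eqref{345term}, I would first substitute the expression $x^k-x^{k+1}=\sigma(1-\rho)\A^*(y^{k+1}-y^k)+\sigma\rho(\A^*y^{k+1}_e+\B^*z^k_e)$ (extracted in the previous step) together with $\A^*y^k+\B^*z^k-c=\A^*y^k_e+\B^*z^k_e$ into the left-hand side. A short computation shows that all contributions involving $\A^*y^{k+1}_e+\B^*z^k_e$ cancel, leaving the clean expression $\sigma(2-\rho)\langle\A^*(y^{k+1}-y^k),\B^*z^k_e\rangle$. Since $(2-\rho)/\rho>0$ for $\rho\in(0,2)$, the claim reduces to bounding $\sigma\rho\langle\A^*(y^{k+1}-y^k),\B^*z^k_e\rangle$ from above. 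Rewriting $\sigma\rho\B^*z^k_e=-(x^{k+1}-x^k)-\sigma(1-\rho)\A^*(y^{k+1}-y^k)-\sigma\rho\A^*y^{k+1}_e$, this inner product splits into the piece $\langle x^{k+1}-x^k,\A^*(y^{k+1}-y^k)\rangle$, controlled by \eqref{rlem1}, plus two terms involving only $\A^*$-quantities that assemble, via \eqref{ele2}, into precisely the $\tfrac{\sigma\rho}{2}(\|\A^*y^k_e\|^2-\|\A^*y^{k+1}_e\|^2)-\tfrac{\sigma(2-\rho)}{2}\|\A^*(y^{k+1}-y^k)\|^2$ terms on the right-hand side. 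The main bookkeeping obstacle will be tracking the signs and the prefactor $(2-\rho)/\rho$ through these cancellations; once the substitutions are in place, the residual identity in $\A^*y^k_e$ and $\A^*y^{k+1}_e$ holds with equality, so the only slack in \eqref{345term} comes from the slack already present in \eqref{rlem1}.
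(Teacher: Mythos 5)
Your proposal is correct and follows essentially the same route as the paper: the same subgradient inclusions $\A x^k-\cS(y^k-\tilde y^k)\in\partial f(y^k)$ combined with \eqref{ineqx}, the same key relation $[x^{k+1}_e+\sigma(1-\rho)\A^*y^{k+1}_e]-[x^k_e+\sigma(1-\rho)\A^*y^k_e]=-\sigma\rho(\A^*y^{k+1}_e+\B^*z^k_e)$ for \eqref{firstterm}, and the same reduction of the left side of \eqref{345term} to $\sigma(2-\rho)\langle\A^*(y^{k+1}-y^k),\B^*z^k_e\rangle$ followed by an appeal to \eqref{rlem1}. The only cosmetic differences are that you bound the $\cS$-term in \eqref{rlem1} by writing an exact identity and discarding the nonnegative remainder $\tfrac{2-\rho}{2}\|\Delta_{k+1}-\Delta_k\|^2_\cS$ where the paper uses a Cauchy--Schwarz estimate, and in \eqref{345term} you substitute for $\B^*z^k_e$ directly rather than splitting off $\A^*y^{k+1}_e+\B^*z^k_e$; both yield identical inequalities.
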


\begin{proof}
By the first order optimality condition of (\ref{prex}), we get
\begin{equation}
\label{optx}
\A\tilde{x}^k-\sigma \A(\A^*y^k+\B^*\tilde{z}^k-c)-\cS(y^k-\tilde{y}^k)\in\partial f(y^k).
\end{equation}
It follows from (\ref{prel}) that
\begin{equation}
\label{optx2}
\A x^k-\cS(y^k-\tilde{y}^k)\in\partial f(y^k),\quad\mbox{and}\quad \A x^{k+1}-\cS(y^{k+1}-\tilde{y}^{k+1})\in\partial f(y^{k+1}),
\end{equation}
which, together with (\ref{ineqx}), implies
\begin{equation}
\label{firstx}
\begin{array}{l}
\langle x^{k+1}-x^k,\A^*(y^{k+1}-y^k)\rangle
-\langle\cS(y^{k+1}-\tilde{y}^{k+1})-\cS(y^k-\tilde{y}^k),y^{k+1}-y^k\rangle
\\[2mm]
\geq \|y^{k+1}-y^k\|_{\Sigma_f}^2.
\end{array}
\end{equation}
From (\ref{relax}), we can easily deduce
\begin{equation}
\begin{array}{ll}
 &\langle\cS(y^{k+1}-\tilde{y}^{k+1})-\cS(y^k-\tilde{y}^k),y^{k+1}-y^k\rangle\\[2mm]
=&\langle\cS(y^{k+1}-\tilde{y}^{k+1})-\cS(y^k-\tilde{y}^k),(y^{k+1}-\tilde{y}^{k+1})+(\tilde{y}^{k+1}-y^k)\rangle\\[2mm]
=&\langle\cS(y^{k+1}-\tilde{y}^{k+1})-\cS(y^k-\tilde{y}^k),(y^{k+1}-\tilde{y}^{k+1})-(1-\rho)(y^k-\tilde{y}^k)\rangle\\[2mm]
=&\|y^{k+1}-\tilde{y}^{k+1}\|^2_\cS-(\rho-1)\|y^k
-\tilde{y}^k\|^2_\cS-(2-\rho)\langle\cS(y^{k+1}
-\tilde{y}^{k+1}),y^k-\tilde{y}^k\rangle\\[2mm]
\ge
&\|y^{k+1}-\tilde{y}^{k+1}\|^2_\cS-(\rho-1)\|y^k
-\tilde{y}^k\|^2_\cS
-\frac{2-\rho}{2}(\|y^{k+1}-\tilde{y}^{k+1}\|^2_\cS
+\|y^k-\tilde{y}^k\|^2_\cS)
\\[2mm]
=&\frac{\rho}{2}\|y^{k+1}
-\tilde{y}^{k+1}\|^2_\cS
-\frac{\rho}{2}\|y^k
-\tilde{y}^k\|^2_\cS,
\end{array}
\end{equation}
which, together with the inequality (\ref{firstx}), implies (\ref{rlem1}).

Secondly, we get from (\ref{prel}) and (\ref{relax}) that
\begin{equation}
\label{llkk}
\tilde{x}^{k+1}=x^{k}+(\rho-1)(x^{k}-\tilde{x}^k)=x^{k}-\sigma(\rho-1)(\A^*y^k+\B^*\tilde{z}^{k}-c).
\end{equation}
From (\ref{prel}), we can get
$$
\begin{array}{rl}
x^{k+1}= &\tilde{x}^{k+1}-\sigma(\A^*y^{k+1}+\B^* \tilde{z}^{k+1}-c),\\
=&x^{k}-\sigma\rho(\A^*y^{k+1}+\B^*z^{k}-c)-\sigma(\rho-1)(\A^*y^k-\A^*y^{k+1}).
\end{array}
$$
This implies
\begin{equation}
\label{ut171}
[x^{k+1}_e+\sigma(1-\rho)\A^*y^{k+1}_e]-[x^k_e+\sigma(1-\rho)\A^*y^k_e]=-\sigma\rho(\A^*y^{k+1}_e+\B^*z^{k}_e).
\end{equation}
Subsequently, using the relation (\ref{ut171}) and the elementary equality (\ref{ele2}), we can get \eqref{firstterm}.

Finally, from \eqref{llkk} we can deduce
\begin{equation}\begin{array}{lll}
&\sigma(1-\rho)(\A^*y^{k+1}_e+\B^*z^k_e)+(x^k-x^{k+1})-\sigma(\A^*y^k+\B^*z^k-c)
\\[2mm]
=&\sigma(1-\rho)(\A^*y^{k+1}_e+\B^*z^k_e)
+\sigma\rho(\A^*y^{k+1}_e+\B^*z^{k}_e)
\\[1.5mm]
&+\sigma(\rho-1)(\A^*y^k_e-\A^*y^{k+1}_e)-\sigma(\A^*y^k_e+\B^*z^k_e)
\\[2mm]
=&-\sigma(2-\rho)(\A^*y^k_e-\A^*y^{k+1}_e).
\end{array}\end{equation}
Thus, we have
\begin{equation}\label{utev171}
\begin{array}{l}
\sigma(1-\rho)\langle\A^*y^{k+1}_e+\B^*z^k_e,\B^*z^k_e\rangle
+\langle x^k-x^{k+1},\B^*z^k_e\rangle
\\[1.5mm]
\quad-\sigma\langle\A^*y^k+\B^*z^k-c,\B^*z^k_e\rangle
\\[2mm]
\quad=-\sigma(2-\rho)\langle\A^*y^k_e-\A^*y^{k+1}_e,\B^*z^k_e\rangle
\\[2mm]
\quad=-(2-\rho)\sigma\langle\A^*y^k_e-\A^*y^{k+1}_e,\A^*y^{k+1}_e+\B^*z^k_e\rangle
\\[1.5mm]
\quad\quad+(2-\rho)\sigma\langle\A^*y^k_e-\A^*y^{k+1}_e,\A^*y^{k+1}_e\rangle.
\end{array}
\end{equation}
By (\ref{ut171}) we have
\begin{equation}
\begin{array}{lll}
 &\sigma\langle\A^*y^k_e-\A^*y^{k+1}_e,\A^*y^{k+1}_e+\B^*z^k_e\rangle\\[2mm]
=&-\rho^{-1}\langle\A^*y^k_e-\A^*y^{k+1}_e,x^{k+1}_e-x^k_e\rangle
+\rho^{-1}\sigma(1-\rho)\|\A^*y^{k+1}_e-\A^*y^k_e\|^2.\label{eight1}
\end{array}
\end{equation}
On the other hand, by using the elementary equality (\ref{ele2}), we can get
\begin{equation}\label{eight2}
\sigma\langle\A^*y^k_e-\A^*y^{k+1}_e,\A^*y^{k+1}_e\rangle
=\frac{\sigma}{2}\Big[\|\A^*y^k_e\|^2-\|\A^*y^k_e-\A^*y^{k+1}_e\|^2
-\|\A^*y^{k+1}_e\|^2 \Big].
\end{equation}
Then by substituting (\ref{eight1}) and (\ref{eight2}) into (\ref{utev171}) and using inequality (\ref{rlem1}) we can get (\ref{345term}).
\qed
\end{proof}

%%%%%%%%%%%%%%%%%[2]%%%%%%%%%%%%%%%%
In what follows, we will establish a key inequality for analyzing the convergence of Algorithm \ref{alg1}. We define the sequence $\{\phi_{k}\}_{k\ge 0}$ by
\begin{equation}
\label{phikk}
\begin{array}{ll}
\phi_{k}:=&
\frac{1}{\sigma\rho}\|x^k_e+\sigma(1-\rho)\A^*y^k_e\|^2
+\rho^{-1}\big[\|\tilde{y}^{k+1}_{e}\|^2_\cS
+\|\tilde{z}^k_{e}\|^2_\T\big]
\\[2mm]
&+(2-\rho)\big[\|y^k-\tilde{y}^k\|^2_\cS+\sigma\|\A^*y^k_e\|^2\big].
\end{array}
\end{equation}

\begin{lemma}\label{lemma5}
Suppose Assumption \ref{assum1} holds. Let the sequence $\{(x^{k},y^k,z^k)\}$ be generated by Algorithm \ref{alg1}. Then for any $k\ge 0$, it holds that
\begin{equation}
\label{conclusion:inequality}
\begin{array}{rl}
\phi_{k}-\phi_{k+1}\ge&
2\|y^{k+1}_e\|^2_{\Sigma_f}+2\|z^k_e\|^2_{\Sigma_g}+\sigma(2-\rho)\|\A^*y^{k+1}_e+\B^*z^k_e\|^2
\\[2mm]
&+(2-\rho)\|\tilde{y}^{k+1}-y^{k+1}\|^2_\cS
+(2-\rho)\|\tilde{z}^k-z^k\|^2_\T
\\[2mm]
&+2(2-\rho)\rho^{-1}\|y^{k+1}-y^k\|_{\Sigma_f}^2+\sigma\rho^{-1}(2-\rho)^2\|\A^*y^k_e-\A^*y^{k+1}_e\|^2.
\end{array}
\end{equation}
\end{lemma}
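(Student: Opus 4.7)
The plan is to combine three ingredients into the claimed descent: (i) the first-order optimality conditions for the $y^{k}$- and $z^{k}$-subproblems, converted into monotonicity inequalities via (\ref{ineqx}); (ii) the three estimates of Lemma \ref{lemma1}; and (iii) the elementary identities (\ref{ele1})--(\ref{ele2}) together with the relaxation relation $\tilde w^{k+1}=\tilde w^{k}+\rho(w^{k}-\tilde w^{k})$, used repeatedly to convert cross products into telescoping squared norms matching the blocks of $\phi_{k}$.

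\textbf{Step 1 (two monotonicity inequalities).} The inclusion $\A x^{k}-\cS(y^{k}-\tilde y^{k})\in\partial f(y^{k})$ is already recorded in (\ref{optx2}); the analogous optimality condition for (\ref{prey}) yields $\B x^{k}-\sigma\B(\A^{*}y^{k}+\B^{*}z^{k}-c)-\T(z^{k}-\tilde z^{k})\in\partial g(z^{k})$. Pairing these inclusions with the KKT inclusions $\A\bar x\in\partial f(\bar y)$, $\B\bar x\in\partial g(\bar z)$ and invoking (\ref{ineqx}), and then multiplying each by $2$, produces
\begin{align*}
2\langle x^{k}_{e},\A^{*}y^{k}_{e}\rangle-2\langle\cS(y^{k}-\tilde y^{k}),y^{k}_{e}\rangle &\ge 2\|y^{k}_{e}\|^{2}_{\Sigma_{f}},\\
2\langle x^{k}_{e},\B^{*}z^{k}_{e}\rangle-2\sigma\langle\A^{*}y^{k}+\B^{*}z^{k}-c,\B^{*}z^{k}_{e}\rangle-2\langle\T(z^{k}-\tilde z^{k}),z^{k}_{e}\rangle &\ge 2\|z^{k}_{e}\|^{2}_{\Sigma_{g}},
\end{align*}
where we have used $\A^{*}\bar y+\B^{*}\bar z=c$ to rewrite $\A^{*}y^{k}+\B^{*}z^{k}-c=\A^{*}y^{k}_{e}+\B^{*}z^{k}_{e}$ inside the $z$-inequality. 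This directly yields the $2\|y^{k}_{e}\|^{2}_{\Sigma_{f}}+2\|z^{k}_{e}\|^{2}_{\Sigma_{g}}$ piece of the target descent.

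\textbf{Step 2 (telescoping the proximal cross terms).} The cross terms $-2\langle\cS(y^{k}-\tilde y^{k}),y^{k}_{e}\rangle$ and $-2\langle\T(z^{k}-\tilde z^{k}),z^{k}_{e}\rangle$ are rewritten by splitting $y^{k}_{e}=\tilde y^{k}_{e}+(y^{k}-\tilde y^{k})$ and applying (\ref{ele2}) in combination with the relaxation identity $\tilde y^{k+1}-\tilde y^{k}=\rho(y^{k}-\tilde y^{k})$ (and analogously in the $z$-variable). After a straightforward re-indexing, this produces exactly the $\rho^{-1}[\|\tilde y^{k+1}_{e}\|^{2}_{\cS}+\|\tilde z^{k}_{e}\|^{2}_{\T}]$ telescoping blocks of $\phi_{k}$, together with the nonnegative residuals $(2-\rho)\|\tilde y^{k+1}-y^{k+1}\|^{2}_{\cS}$ and $(2-\rho)\|\tilde z^{k}-z^{k}\|^{2}_{\T}$ that appear in (\ref{conclusion:inequality}).

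\textbf{Step 3 (invoking Lemma \ref{lemma1}).} The remaining cross terms $2\langle x^{k}_{e},\A^{*}y^{k}_{e}+\B^{*}z^{k}_{e}\rangle-2\sigma\langle\A^{*}y^{k}+\B^{*}z^{k}-c,\B^{*}z^{k}_{e}\rangle$ are regrouped by writing $x^{k}_{e}=[x^{k+1}_{e}+\sigma(1-\rho)\A^{*}y^{k+1}_{e}]+(x^{k}_{e}-x^{k+1}_{e})-\sigma(1-\rho)\A^{*}y^{k+1}_{e}$ and $\A^{*}y^{k}_{e}=\A^{*}y^{k+1}_{e}+(\A^{*}y^{k}_{e}-\A^{*}y^{k+1}_{e})$. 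The dominant piece is handled by doubling the equality (\ref{firstterm}), which yields the $\frac{1}{\sigma\rho}$-telescope defining the first block of $\phi_{k}$ together with a tail $-\sigma\rho\|\A^{*}y^{k+1}_{e}+\B^{*}z^{k}_{e}\|^{2}$. The correction cross terms match precisely the left-hand side of (\ref{345term}) multiplied by $2$, so invoking that bound contributes the $(2-\rho)\|y^{k}-\tilde y^{k}\|^{2}_{\cS}$ and $\sigma(2-\rho)\|\A^{*}y^{k}_{e}\|^{2}$ telescopes of $\phi_{k}$, the nonnegative residuals $2(2-\rho)\rho^{-1}\|y^{k+1}-y^{k}\|^{2}_{\Sigma_{f}}$ and $\sigma\rho^{-1}(2-\rho)^{2}\|\A^{*}y^{k}_{e}-\A^{*}y^{k+1}_{e}\|^{2}$, and upgrades the coefficient of $\|\A^{*}y^{k+1}_{e}+\B^{*}z^{k}_{e}\|^{2}$ from $-\sigma\rho$ to $-\sigma(2-\rho)$ after absorbing the $\sigma(1-\rho)\langle\B^{*}z^{k}_{e},\A^{*}y^{k+1}_{e}+\B^{*}z^{k}_{e}\rangle$ term from (\ref{345term}). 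Collecting all contributions and identifying them with (\ref{phikk}) gives (\ref{conclusion:inequality}).

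\textbf{Main obstacle.} The argument is entirely linear-algebraic, but the principal difficulty is bookkeeping: the weights $1/(\sigma\rho)$, $\rho^{-1}$, and $(2-\rho)$ in $\phi_{k}$ must align exactly once the three estimates of Lemma \ref{lemma1} are doubled and merged with the doubled monotonicity inequalities and with the relaxation expansions of $\|\tilde y^{k+1}_{e}\|^{2}_{\cS}$ and $\|\tilde z^{k+1}_{e}\|^{2}_{\T}$. In particular, the production of the coefficient $\sigma(2-\rho)$ in front of $\|\A^{*}y^{k+1}_{e}+\B^{*}z^{k}_{e}\|^{2}$ depends on a careful cancellation between the $\sigma\rho$ tail of (\ref{firstterm}) and the $\sigma(1-\rho)$ cross product of (\ref{345term}), and this merging is the most delicate step.
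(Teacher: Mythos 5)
Your overall strategy --- sum the two block monotonicity inequalities, telescope the proximal cross terms via the relaxation identity, and feed the remaining cross terms into the three estimates of Lemma \ref{lemma1} --- is exactly the paper's, and your accounting for the coefficient $\sigma(2-\rho)$ in front of $\|\A^{*}y^{k+1}_{e}+\B^{*}z^{k}_{e}\|^{2}$ is essentially right. However, there is a genuine gap in Step 1: you take the $y$-block monotonicity inequality at index $k$, pairing $\A x^{k}-\cS(y^{k}-\tilde y^{k})\in\partial f(y^{k})$ with the KKT inclusion to get $2\langle x^{k}_{e},\A^{*}y^{k}_{e}\rangle-2\langle\cS(y^{k}-\tilde y^{k}),y^{k}_{e}\rangle\ge 2\|y^{k}_{e}\|^{2}_{\Sigma_f}$, whereas the staggered structure of Lemma \ref{lemma1} forces index $k+1$: the paper uses $\A x^{k+1}-\cS(y^{k+1}-\tilde y^{k+1})\in\partial f(y^{k+1})$ to obtain \eqref{optx3}, so that after adding the $z$-inequality \eqref{opty3} the combination $\langle x^{k+1}_{e},\A^{*}y^{k+1}_{e}+\B^{*}z^{k}_{e}\rangle$ appears directly and is decomposable by \eqref{estimate1} and \eqref{firstterm}, the only leftover being $\langle x^{k}-x^{k+1},\B^{*}z^{k}_{e}\rangle$, which sits verbatim in the left-hand side of \eqref{345term}. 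With your choice the sum instead produces $2\langle x^{k}_{e},\A^{*}y^{k}_{e}+\B^{*}z^{k}_{e}\rangle$, and your proposed splitting $\A^{*}y^{k}_{e}=\A^{*}y^{k+1}_{e}+\A^{*}(y^{k}_{e}-y^{k+1}_{e})$ leaves behind the cross term $2\langle x^{k}_{e},\A^{*}(y^{k}-y^{k+1})\rangle$, which none of \eqref{rlem1}, \eqref{firstterm}, \eqref{345term} controls: \eqref{rlem1} only handles $\langle x^{k+1}-x^{k},\A^{*}(y^{k+1}-y^{k})\rangle$, a difference of multipliers, not the error $x^{k}_{e}$ itself. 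That term has no sign and no telescoping structure, so the argument does not close.

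The same index slip propagates to Step 2: starting from $-2\langle\cS(y^{k}-\tilde y^{k}),y^{k}_{e}\rangle$, the identity \eqref{ele1} together with $\tilde y^{k+1}-\tilde y^{k}=\rho(y^{k}-\tilde y^{k})$ yields the telescope $\rho^{-1}\big[\|\tilde y^{k}_{e}\|^{2}_{\cS}-\|\tilde y^{k+1}_{e}\|^{2}_{\cS}\big]$ and the residual $-(2-\rho)\|\tilde y^{k}-y^{k}\|^{2}_{\cS}$, each one index below the block $\rho^{-1}\|\tilde y^{k+1}_{e}\|^{2}_{\cS}$ in the definition \eqref{phikk} of $\phi_{k}$ and the residual $(2-\rho)\|\tilde y^{k+1}-y^{k+1}\|^{2}_{\cS}$ in \eqref{conclusion:inequality}; the outputs you claim for Step 2 therefore do not follow from the inequalities you wrote in Step 1 (compare \eqref{sxk}, which is stated at index $k+1$). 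For what it is worth, the paper's own statement carries $2\|y^{k}_{e}\|^{2}_{\Sigma_f}$ on the right-hand side while its proof actually produces $2\|y^{k+1}_{e}\|^{2}_{\Sigma_f}$ (see \eqref{ineq:des}); this is harmless for Theorem \ref{theorem}, but chasing that index in the statement appears to be what led you to the wrong optimality condition. The fix is to replace your first inequality by \eqref{optx3} and rerun Steps 2--3, which then reproduce the paper's proof.
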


\begin{proof}
We get from (\ref{prey})
\begin{equation}
\label{opty}
\B x^{k+1}+(\B x^k-\B x^{k+1})-\sigma \B(\A^* y^k+\B^*z^k-c)-\T(z^k-\tilde{z}^k)\in\partial g(z^k).
\end{equation}
By combining (\ref{opty}), (\ref{optsol}) and (\ref{ineqx}) together, one obtains
\begin{equation}
\label{opty3}
\begin{array}{l}
\langle x^{k+1}_e,\B^*z^k_e\rangle+\langle x^k-x^{k+1},\B^*z^k_e\rangle
-\langle\T(z^k-\tilde{z}^k),z^k_e\rangle
\\[2mm]
-\sigma\langle\A^*y^k+\B^*z^k-c,\B^*z^k_e\rangle
\geq \|z^k_e\|^2_{\Sigma_g}.
\end{array}
\end{equation}
%%%%%%%%%%%%%%%%%%%%%%%%%%%%%%%
On the other hand, by combining (\ref{optx2}), (\ref{optsol}) and (\ref{ineqx}) together, we obtain
\begin{equation}\label{optx3}
\langle x^{k+1}_e,\A^*y^{k+1}_e\rangle-\langle\cS(y^{k+1}-\tilde{y}^{k+1}),y^{k+1}_e\rangle
\geq \|y^{k+1}_e\|^2_{\Sigma_f}.
\end{equation}
Then by adding (\ref{opty3}) to (\ref{optx3}) we get
%%%%%%%%%%%%%%
\begin{equation}
\label{rlem2}
\begin{array}{l}
\langle x^{k+1}_e,\A^*y^{k+1}_e+\B^*z^k_e\rangle+\langle x^k-x^{k+1},\B^*z^k_e\rangle
-\sigma\langle\A^*y^k+\B^*z^k-c,\B^*z^k_e\rangle
\\[2mm]
-\langle\cS(y^{k+1}-\tilde{y}^{k+1}),y^{k+1}_e\rangle
-\langle\T(z^k-\tilde{z}^k),z^k_e\rangle\geq \|y^{k+1}_e\|^2_{\Sigma_f}+\|z^k_e\|^2_{\Sigma_g}.
\end{array}
\end{equation}
Note that
\begin{equation}
\label{estimate1}
\begin{array}{ll}
&\langle x^{k+1}_e,\A^*y^{k+1}_e+\B^*z^k_e\rangle
\\[2mm]
=&\langle x^{k+1}_e+\sigma(1-\rho)\A^*y^{k+1}_e,\A^* y^{k+1}_e+\B^*z^k_e\rangle
\\[1mm]
& -\sigma(1-\rho)\langle\A^* y^{k+1}_e,\A^*y^{k+1}_e+\B^*z^k_e\rangle
\\[2mm]
=&\langle x^{k+1}_e+\sigma(1-\rho)\A^*y^{k+1}_e,\A^*y^{k+1}_e+\B^*z^k_e\rangle
\\[1mm]
& -\sigma(1-\rho)\langle\A^*y^{k+1}_e+\B^*z^k_e,\A^*y^{k+1}_e+\B^*z^k_e\rangle
 \\[1mm]
 &+\sigma(1-\rho)\langle\B^*z^k_e,\A^*y^{k+1}_e+\B^*z^k_e\rangle.
\end{array}
\end{equation}
Consequently, \eqref{rlem2} can be reformulated as
\begin{equation}
\label{lem2}
\begin{array}{ll}
\langle x^{k+1}_e+\sigma(1-\rho)\A^*y^{k+1}_e,\A^*y^{k+1}_e+\B^*z^k_e\rangle
-\sigma(1-\rho)\|\A^*y^{k+1}_e+\B^*z^k_e\|^2
\\[2mm]
+\sigma(1-\rho)\langle\A^*y^{k+1}_e+\B^*z^k_e,\B^*z^k_e\rangle
+\langle x^k-x^{k+1},\B^*z^k_e\rangle
\\[2mm]
-\sigma\langle\A^*y^k+\B^*z^k-c,\B^*z^k_e\rangle
-\langle\cS(y^{k+1}-\tilde{y}^{k+1}),y^{k+1}_e\rangle
\\[2mm]
-\langle\T(z^k-\tilde{z}^k),z^k_e\rangle\geq \|y^{k+1}_e\|^2_{\Sigma_f}+\|z^k_e\|^2_{\Sigma_g}.
\end{array}
\end{equation}
Next, we shall estimate the left-hand-side of (\ref{lem2}).
Using the relation (\ref{relax}) and the elementary equality (\ref{ele1}), we get
\begin{equation}\label{sxk}
\langle\cS(y^{k+1}-\tilde{y}^{k+1}),y^{k+1}_e\rangle
=\frac{1}{2\rho}\Big[\|\tilde{y}^{k+2}-\bar{y}\|^2_\cS-\|\tilde{y}^{k+1}-\bar{y}\|^2_\cS+\rho(2-\rho)\|\tilde{y}^{k+1}-y^{k+1}\|^2_\cS\Big],
\end{equation}
and
\begin{equation}\label{tyk}
\langle\T(z^k-\tilde{z}^k),z^k_e\rangle
=\frac{1}{2\rho}\Big[\|\tilde{z}^{k+1}-\bar{z}\|^2_\T-\|\tilde{z}^k-\bar{z}\|^2_\T+\rho(2-\rho)\|\tilde{z}^k-z^k\|^2_\T\Big].
\end{equation}
Substituting (\ref{firstterm}), (\ref{345term}), (\ref{sxk}), and (\ref{tyk}) into  (\ref{lem2}), we get
\begin{equation}
\label{ineq:des}
\begin{array}{lll}
-\frac{1}{\sigma\rho}\Big[\|x^{k+1}_e
+\sigma(1-\rho)\A^*y^{k+1}_e\|^2
-\|x^k_e+\sigma(1-\rho)\A^*y^k_e\|^2\Big]
\\[2mm]
-\frac{1}{\rho}\Big[\|\tilde{y}^{k+2}-\bar{y}\|^2_\cS
-\|\tilde{y}^{k+1}-\bar{y}\|^2_\cS\Big]
-\frac{1}{\rho}\Big[\|\tilde{z}^{k+1}-\bar{z}\|^2_\T
-\|\tilde{z}^k-\bar{z}\|^2_\T\Big]
\\[2mm]
+(2-\rho)\rho^{-1}\Big[\rho\|y^k-\tilde{y}^k\|^2_\cS
-\rho\|y^{k+1}-\tilde{y}^{k+1}\|^2_\cS
-\sigma\rho\|\A^*y^{k+1}_e\|^2
+\sigma\rho\|\A^*y^k_e\|^2\Big]
\\[3mm]
\geq
2\|y^{k+1}_e\|^2_{\Sigma_f}
+2\|z^k_e\|^2_{\Sigma_g}
+\sigma(2-\rho)\|\A^*y^{k+1}_e+\B^*z^k_e\|^2
\\[2mm]
\quad+(2-\rho)\|\tilde{y}^{k+1}-y^{k+1}\|^2_\cS+(2-\rho)\|\tilde{z}^k-z^k\|^2_\T
\\[2mm]
\quad+2(2-\rho)\rho^{-1}\|y^{k+1}-y^k\|_{\Sigma_f}^2+\sigma\rho^{-1}(2-\rho)^2\|\A^*y^k_e-\A^*y^{k+1}_e\|^2.
\end{array}
\end{equation}
Hence, we get \eqref{conclusion:inequality} by the definition of $\phi_k$ and complete the proof.
\qed
\end{proof}
\begin{remark}
The above proof looks like to those in \cite{SEMP13} and \cite{admmnote} since the basic tools for proving all these theoretical results are closely based on an earlier work of Fortin and Glowinski\cite{FORTIN83}.
However, the most important steps of this paper are quite different from the previous ones.
An explicit example is the definition of $\phi_k$ in \eqref{conclusion:inequality},
which originally comes from \eqref{estimate1}, that plays the key role  to get   \eqref{ineq:des}.
The motivation is that we want to decompose $x_e^{k+1}$ so that there is not any inner product terms in \eqref{conclusion:inequality}.
Therefore, we need some elementary but careful calculations, which are not previously conducted.
\end{remark}

Now, we are ready to establish the global convergence of Algorithm \ref{alg1}.

\begin{theorem}\label{theorem}
Suppose Assumption \ref{assum1} holds. Let $\{(x^k,y^k,z^k)\}$ be the sequence generated by Algorithm \ref{alg1}. Then the whole sequence $\{(x^k,y^k,z^k)\}$ converges to a solution to the KKT system \eqref{optsol}.
\end{theorem}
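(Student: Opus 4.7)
The plan is a standard Fej\'er-monotone argument based on the key inequality of Lemma \ref{lemma5}. Since $\rho \in (0,2)$, every summand on the right-hand side of \eqref{conclusion:inequality} is nonnegative, so the sequence $\{\phi_k\}$ defined in \eqref{phikk} is nonincreasing and bounded below by zero. Telescoping immediately yields summability of each of $\|y^{k+1}_e\|_{\Sigma_f}^2$, $\|z^k_e\|_{\Sigma_g}^2$, $\|\A^*y^{k+1}_e + \B^*z^k_e\|^2$, $\|\tilde y^{k+1}-y^{k+1}\|_\cS^2$, $\|\tilde z^k-z^k\|_\T^2$, and $\|\A^*(y^{k+1}-y^k)\|^2$, so all of these quantities tend to zero. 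In particular, $\A^* y^k + \B^* z^k - c \to 0$.

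First I would establish boundedness of $\{w^k\} := \{(x^k,y^k,z^k)\}$. Since $\phi_k \le \phi_0$, the quantities $\|x^k_e + \sigma(1-\rho)\A^* y^k_e\|$, $\|\tilde y^{k+1}_e\|_\cS$, $\|\tilde z^k_e\|_\T$, $\|y^k-\tilde y^k\|_\cS$, and $\|\A^* y^k_e\|$ are all bounded. The relaxation step $\tilde y^{k+1}-\tilde y^k = \rho(y^k - \tilde y^k)$ then shows $\|\tilde y^k_e\|_\cS$, and hence $\|y^k_e\|_\cS$, is bounded; together with $\|\A^* y^k_e\|$ bounded, the strict positive definiteness $\Sigma_f + \cS + \A\A^* \succ 0$ required by Algorithm \ref{alg1} forces $\|y^k_e\|$ to be bounded in the Euclidean norm. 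A parallel argument for $z$ (using $\|\B^* z^k_e\|$ bounded via $\|\A^*y^{k+1}_e + \B^*z^k_e\|$ and $\|\A^* y^{k+1}_e\|$, together with $\Sigma_g + \T + \B\B^* \succ 0$) bounds $\|z^k_e\|$, and boundedness of $\|x^k_e\|$ then follows from that of $\|x^k_e + \sigma(1-\rho)\A^* y^k_e\|$ and $\|\A^* y^k_e\|$.

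Next, extract a convergent subsequence $w^{k_i}\to w^\infty = (x^\infty, y^\infty, z^\infty)$. Passing to the limit in $\A^* y^k + \B^* z^k = c + o(1)$ gives $\A^* y^\infty + \B^* z^\infty = c$. From the optimality conditions \eqref{optx2} and \eqref{opty},
\begin{align*}
\A x^k - \cS(y^k - \tilde y^k) &\in \partial f(y^k), \\
\B x^k - \sigma\B(\A^* y^k + \B^* z^k - c) - \T(z^k - \tilde z^k) &\in \partial g(z^k);
\end{align*}
since $\cS(y^k-\tilde y^k) \to 0$ and $\T(z^k-\tilde z^k) \to 0$ (as $\cS$, $\T$ are bounded operators whose induced seminorms vanish on these differences) and $\A^* y^k + \B^* z^k - c \to 0$, the closedness of the graphs of the maximal monotone operators $\partial f$ and $\partial g$ pushes these inclusions to the limit, giving $\A x^\infty \in \partial f(y^\infty)$ and $\B x^\infty \in \partial g(z^\infty)$. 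Hence $w^\infty$ solves the KKT system \eqref{optsol}.

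To upgrade subsequential convergence to full-sequence convergence, I would re-apply Lemma \ref{lemma5} with this particular $w^\infty$ playing the role of $(\bar x, \bar y, \bar z)$. Direct inspection of \eqref{phikk} along the subsequence $\{k_i\}$ shows $\phi_{k_i}\to 0$: the Euclidean convergence $w^{k_i}\to w^\infty$ together with $\|y^k-\tilde y^k\|_\cS\to 0$ and $\|z^k-\tilde z^k\|_\T\to 0$ drives every summand of $\phi_{k_i}$ to zero. Since $\{\phi_k\}$ is nonincreasing, $\phi_k\to 0$, and reading off its components together with the conditions $\Sigma_f+\cS+\A\A^*\succ 0$ and $\Sigma_g+\T+\B\B^*\succ 0$ yields $y^k\to y^\infty$, $z^k\to z^\infty$, and $x^k\to x^\infty$ in the Euclidean norm. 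The genuinely delicate point throughout is the repeated passage from the seminorms induced by $\cS$, $\T$, $\Sigma_f$, $\Sigma_g$ to the genuine Euclidean norm, which is precisely where the positive-definiteness conditions built into Algorithm \ref{alg1} are indispensable.
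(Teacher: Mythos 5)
Your proposal is correct and follows essentially the same route as the paper: monotonicity and nonnegativity of $\phi_k$ from Lemma \ref{lemma5}, vanishing of the right-hand-side terms, boundedness of the iterates via the positive definiteness of $\Sigma_f+\cS+\A\A^*$ and $\Sigma_g+\T+\B\B^*$, passage to the limit in the optimality inclusions along a subsequence, and then re-instantiating $(\bar x,\bar y,\bar z)$ as the subsequential limit so that $\phi_{k_i}\to 0$ forces $\phi_k\to 0$ and full convergence. The only cosmetic differences are that you invoke summability by telescoping where the paper argues vanishing directly, and you make explicit the step from $\|u\|_\cS\to 0$ to $\cS u\to 0$, which the paper leaves implicit.
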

\begin{proof}
Note that $\rho\in(0,2)$. We clearly see from \eqref{conclusion:inequality} that $\{\phi_k\}_{k\geq 0}$ is a nonnegative and monotonically non-increasing sequence. Hence, $\{\phi_k\}$ is also bounded. As a result, the following sequences are bounded:
\begin{equation}\label{bound}
\{\|x^k_e+\sigma(1-\rho)\A^*y^k_e\|\},\,
\{\|\tilde{y}^{k+1}\|_\cS\},\,
\{\|\tilde{z}^k\|_\T\},\,
\{\|y^k-\tilde{y}^k\|_\cS\},
\, \mbox{and}\, \{\|\A^*y^k_e\|\}.
\end{equation}
Moreover, from the inequality \eqref{conclusion:inequality} we have as $k\to\infty$,
\begin{equation}
\label{mullim}
\begin{array}{l}
\|y^{k+1}_e\|_{\Sigma_f}\to 0,\,
\|z^k_e\|_{\Sigma_g}\rightarrow 0,\,
\|\A^*y^{k+1}_e+\B^*z^k_e\|\rightarrow 0,\,
\|\tilde{y}^{k+1}-y^{k+1}\|_\cS\rightarrow0,
\\[2mm]
\|\tilde{z}^k-z^k\|_\T\rightarrow 0,\,
\|y^{k+1}-y^k\|_{\Sigma_f}\rightarrow 0,\
\mbox{and}\
\|\A^*y^k_e-\A^*y^{k+1}_e\|\rightarrow 0.
\end{array}
\end{equation}
Thus, by the fact $\|y^k\|_\cS\leq\|y^k-\tilde{y}^k\|_\cS+\|\tilde{y}^k\|_\cS$ we can see that $\{\|y^k\|_\cS\}$ is also bounded. Consequently, the sequence $\{\|y^k\|_{\Sigma_f+\cS+\A\A^*}\}$ is bounded. Since $\Sigma_f+\cS+\A\A^*\succ0$, the sequence $\{\|y^k\|\}$ is bounded.
Similarly, the sequences $\{\|z^k\|_\T\}$ and $\{\|\B^*z^k\|\}$ are both bounded and so is the sequence $\{\|z^k\|_{\Sigma_g+\T+\B\B^*}\}$.
It implies that the sequence $\{\|z^k\|\}$ is bounded from $\Sigma_g+\T+\B\B^*\succ 0$.
The boundedness of $\{\|x^k_e+\sigma(1-\rho)\A^*y^k_e\|\}$ and $\{\|y^k\|\}$ further indicate that the sequence $\{\|x^k\|\}$ is bounded. The above arguments have shown that $\{(x^{k},y^{k},z^{k})\}$ is a bounded sequence.

Consequently, the sequence $\{(x^{k},y^{k},z^{k})\}$ admits at least one convergent subsequence. Suppose that $\{(x^{k_i},y^{k_i},z^{k_i})\}$ is a subsequence of $\{(x^{k},y^{k},z^{k})\}$ converging to $(x^\infty,y^\infty,z^\infty)\in\X\times\Y\times\Z$.
It follows from \eqref{optx2} and \eqref{opty} that
\begin{equation}
\label{forcon}
\left\{
\begin{array}{l}
\A x^{k_i}-\cS(y^{k_i}-\tilde{y}^{k_i})\in\partial f(y^{k_i}),\\[2mm]
\B x^{k_i}-\sigma \B(\A^* y^{k_i}+\B^*z^{k_i}-c)-\T(z^{k_i}-\tilde{z}^{k_i})\in\partial g(z^{k_i}).
\end{array}
\right.
\end{equation}
We also have from (\ref{mullim}) that $\lim_{k\to\infty}(\A^{*}y^{k}+\B^{*}z^{k}-c)=0$. Taking limits in (\ref{forcon}) and using (\ref{mullim}), one obtains
$$
\A x^\infty\in\partial f(y^\infty),\quad
\B x^\infty\in\partial g(z^\infty)
\quad\mbox{and}\quad
\A^*y^\infty+\B^*z^\infty-c=0,
$$
which indicates that $(x^\infty,y^\infty,z^\infty)$ is a solution to the KKT system \eqref{optsol}.

Next we need to show that  $(x^\infty,y^\infty,z^\infty)$ is the unique limit point of the sequence $\{(x^{k},y^{k},z^{k})\}$. Without loss of generality we can let $(\bar x,\bar y,\bar z)=(x^\infty,y^\infty,z^\infty)$.
Consequently, the sequence $\{\phi_{k}\}$ defined by \eqref{phikk} has a subsequence converges to zero. Hence $\{\phi_{k}\}$ itself converging to zero. By the definition of $\phi_{k}$ we have $\lim_{k\rightarrow\infty}x^k= \bar{x}$. Moreover, from
$\|\tilde{y}^{k+1}-y^{k+1}\|_\cS\rightarrow0$ in (\ref{mullim}), it is easy to show that $\|y^{k}_e\|_\cS\rightarrow 0$ as $k\rightarrow\infty$. Noting that $\|\A^*y^k_e\|\rightarrow 0$ in (\ref{bound}) and $\|y^k_e\|_{\Sigma_f}\rightarrow 0$
in (\ref{mullim}), we have
$\{\|y^{k}_e\|_{\Sigma_f}+\|y^{k}_e\|_\cS+\|\A^*y^{k}_e\|\}\to 0$ as $k\to\infty$.
Hence, we have $\lim_{k\rightarrow\infty}y^k=\bar{y}$ since $\Sigma_f+\cS+\A\A^*\succ 0$.
Finally, from the fact $\|\A^*y^{k+1}_e\|\rightarrow 0$,  $\|\A^*y^{k+1}_e+\B^*z^k_e\|\rightarrow 0$ in (\ref{mullim}), and
$$
\|\B^*z^k_e\|\leq\|\A^*y^{k+1}_e+\B^*z^k_e\|+\|\A^*y^{k+1}_e\|,
$$
we get $\|\B^*z^k_e\|\rightarrow 0$.
Since $\|z^k_e\|_\T\rightarrow 0$ and $\|z^k_e\|_{\Sigma_g}\rightarrow 0$ by (\ref{mullim}), we have
$\{\|z^{k}_e\|_{\Sigma_g}+\|z^{k}_e\|_\T+\|\B^*z^{k}_e\|\}\to 0$ as $k\to\infty$.
Therefore, from the fact $\Sigma_g+\T+\B\B^*\succ0$ we know that
$\lim_{k\rightarrow\infty}z^k=\bar{z}$.  This completes the proof.
\qed
\end{proof}

%%%%%%%%%%%%%%%%%%%%%%%%%%%%%%%%%%%%%%%%%%%%%%%%%%%%%%%%%%%5
\section{Numerical Experiments}
\label{numer}

In this section, we apply the generalized ADMM with semi-proximal terms (Algorithm \ref{alg1}) proposed in this paper together with the block sGS techniques (Algorithm \ref{alg2}) to solve convex DNN-SDP problem via its dual. Specifically, the DNN-SDP
problem is represented by
\begin{eqnarray}
\begin{array}{ll}
\min\limits_X &  \ds \langle{C},{X}\rangle  \\[2mm]
\mbox{s.t.}
       &\A_E X   =  b_E, \ \A_I X    \geq b_I, \  X \in \cS^n_+\cap \N,
\end{array}
\label{dnnsdp}
\end{eqnarray}
where $\cS^n$ is the space of $n\times n$ symmetric matrices,
$\A_E:\cS^n \rightarrow \Re^{m_E}$ and $\A_I:\cS^n\rightarrow \Re^{m_I}$ are two linear maps, $\cS_+^n$ is the closed convex cone of $n\times n$ symmetric and positive semidefinite matrices, $\N$ is a nonempty simple closed convex set (e.g., $\N =\{X\in\cS^n\;|\; L\leq X\leq U\}$ with $L,U\in \cS^n$ being given matrices),  $C\in \cS^n$,  $b_E\in \Re^{m_E}$ and $b_I\in \Re^{m_I}$ are given data.

The dual of problem \eqref{dnnsdp} then takes the following form
 \begin{eqnarray}
  \begin{array}{rllll}
    \max\limits_{S,Z,y_E,y_I} &  \ds -\delta_{\N}^*(-Z)  + \langle{b_E},{y_E}\rangle + \langle{b_I},{y_I}\rangle   \\[2mm]
   \mbox{s.t.} & Z + S + \A_E^* y_E + \A_I^*y_I = C, \ S \in \cS^n_+, \quad y_I\geq 0,
   \end{array}
   \label{eq-d-qsdp}
\end{eqnarray}
which is an instance of multi-block composite conic optimization problems.
In order to handle the inequality constraints in \eqref{eq-d-qsdp} so that
it can be solved by the GADMM algorithm together with the sGS technique, we introduce a slack variable $v$ and a positive definite a nonsinglular linear operator $\D^*:\Y\to\Y$ to get the following equivalent optimization problem:
\begin{eqnarray}
\begin{array}{rllll}
\min\limits_{S,Z,y_E,y_I,v} &  \ds \big(\delta_{\N}^*(-Z) +\delta_{\Re_{+}^{m_I}}(v)\big)+ \delta_{\cS^n_{+}}(S) - \langle{b_E},{y_E}\rangle - \langle{b_I},{y_I}\rangle   \\[2mm]
 \mbox{s.t.} & Z  + S + \A_E^* y_E + \A_I^*y_I = C, \  \D^*(v -  y_I) = 0.
   \end{array}
   \label{eq-qsdp-dual}
\end{eqnarray}
Therefore, the DNN-SDP \eqref{dnnsdp} can be solved by the GADMM algorithm via its dual form \eqref{eq-qsdp-dual}, together with the sGS techniques that were illustrated in Algorithm \ref{alg2}.

We use $X\in\cS^{n}$ and $u\in\Re^{m_{I}}$ to denote the dual variables corresponding to the two groups of equality constraints in \eqref{eq-qsdp-dual}, respectively.
Moreover, we denote  the normal cone of $\N$ at $X$ by $N_{\N}(X)$.
Then, the KKT system of problem \eqref{dnnsdp} is given by
\begin{equation}\label{eqopM}
\left\{
\begin{array}{l}
\A_{E}^{*}y_{E}+\A_{I}^{*}y_{I}+S+Z-C=0, \  \A_{E}X-b_{E}=0,
\\[1mm]
0\in N_{\N}(X)+Z,\
X\in\cS^{n}_{+},\
S\in\cS_{+}^{n},\
\langle X,S\rangle=0,
\\[1mm]
\A_{I}X-b_{I}\ge0,\ y_{I}\ge 0,\  \langle \A_{I}x-b,y_{I}\rangle =0.
\end{array}
\right.
\end{equation}
Based on the optimality condition \eqref{eqopM},
we measure the accuracy of a computed candidate solution $(X,Z,S,y_E,y_{I})$ for \eqref{dnnsdp} and its dual \eqref{eq-d-qsdp} via
$$
\begin{array}{c}
\eta_{SDP} =
\max\{\eta_D,\eta_{X},\eta_Z,  \eta_P, \eta_{S},
\eta_{I}\},
\end{array}
$$ where
\begin{equation}
\label{stop:dnnsdp}
\begin{array}{l}
\eta_{D}=\frac{\|\A_{E}^{*}y_{E}+\A_{I}^{*}y_{I}+S+Z-C\|}{1+\|C\|},\
\eta_{X}=\frac{\|X-\Pi_{\N}(X)\|}{1+\|X\|},\
\eta_{Z}=\frac{\|X-\Pi_{\N}(X-Z)\|}{1+\|X\|+\|Z\|},
\\[3mm]
\eta_{P}=\frac{\|\A_{E}X-b_{E}\|}{1+\|b_{E}\|},\
\eta_{S}=\max\big\{\frac{\|X-\Pi_{S_{+}^{n}}(X)\|}{1+\|X\|},
\frac{|\langle X,S\rangle|}{1+\|X\|+\|S\|}\big\},
\\[3mm]
\eta_{I}=\max\big\{
\frac{\|\min(0,y_{I})\|}{1+\|y_{I}\|},
\frac{\|\min(0,\A_{I}X-b_{I})\|}{1+\|b_{I}\|},
\frac{|\langle \A_{I}X-b_{I},y_{I}\rangle|}{1+\|\A_{I}x-b_{I}\|+\|y_{I}\|}\big\}.
\end{array}
\end{equation}
%In addition, we also measure the objective value and the duality gap:
%\begin{equation}
%\label{gap}
%\begin{array}{rl}
%{\rm Obj_{primal}}&:=\frac{1}{2}\langle X,\Q X\rangle+\langle C,X\rangle,\\[1mm]
%%{\rm Obj_{dual}}&:=-\delta_{\N}^*(-Z)-\frac{1}{2}+ \langle{b_E},{y_E}\rangle + %\langle{b_I},{y_I}\rangle,
%\\[1mm]
%\eta_{\rm gap}&:=\frac{\rm Obj_{primal}-Obj_{dual}}{1+|\rm Obj_{primal}|+|Obj_{dual}|}.
%\end{array}
%\end{equation}
Next, we would like to report our numerical results.

\subsection{Numerical Results on DNN-SDP with Many Inequality Constraints}
In our numerical experiments, we first test DNN-SDP problems from the relaxation of a binary integer quadratic (BIQ) programming, with a large number of valid inequalities introduced by Sun et. al. \cite{YANGSIAM} to get tighter bounds, i.e.,
\[
     \begin{array}{l}
    \min \    \frac{1}{2} \langle{Q},{\overline{X}}\rangle + \langle{c},{x}\rangle  \\[5pt]
   \mbox{s.t.}\
        {\rm diag}(\overline{X}) - x = 0, \quad     X = \left(
               \begin{array}{cc}
                 \overline{X} & x \\
                 x^T & 1\\
               \end{array}
             \right)
       \in \cS^n_+,\quad  X\ge 0,\\[3mm]
    \ \quad   x_i  -\overline{X}_{ij} \geq 0,\;  x_j -\overline{X}_{ij} \geq 0,\;
 \overline{X}_{ij} - x_i -x_j \geq -1\;  \forall i < j,  j=2,\dots,n-1. \end{array}
\]
In the above problem,  ${\rm diag}(\overline{X}) $ denotes the vector consisted of the diagonal elements of the $(n-1)\times(n-1)$ matrix $\bar X$. In our test, the  data for {$Q$ and $c$ are taken from the} Biq Mac Library\footnote{\url{http://biqmac.uni-klu.ac.at/biqmaclib.html}} maintained by Wiegele.
The linear operator $\D$ in \eqref{eq-qsdp-dual} is set by $\D:=\alpha\I$.
By writing down the augmented Lagrangian function of  problem \eqref{eq-qsdp-dual}, one can observe that the quadratic (penalty) terms in it takes the from of
$$
\frac{\sigma}{2}
\| Z  + S + \A_E^* y_E + {\A_I^*}y_I -C\|^2
+\frac{\sigma{\alpha^2}}{2}\|v -  y_I\|^2.
$$
Therefore, we can set  choose $\alpha=\sqrt{\|\A_k\|}/2$ so that, numerically, the second term above is relatively stable with respect to the previous one. Of course, one can choose other numbers.
By properly choosing the proximal terms via the techniques provided in \cite[Sect. 7.1]{CHENL}, the corresponding subproblems will have closed-form solutions.
We mention that all these algorithms are tested by running {\sc Matlab} on a HP Elitedesk with one Intel Core i7-4770S Processor
(4 Cores, 8 Threads, 8M Cache, 3.1 to 3.9 GHz) and 8 GB (DDR3-1600 MHz) RAM.

We compare the performance of GADMM with the relaxation parameter $\rho$ ranging from $1$ to $1.9$. The reason is that, on the one hand, during a preliminary test on a few number of problems, we found that the case $\rho<1$ is not interesting in the sense that it is almost no better than $\rho=1$, and, on the other hand,  the design of the algorithm in this paper is intended to use a relatively large $\rho$ to get better numerical performance.
Besides,  the performance of the sGS based semi-proximal ADMM from \cite{YANGSIAM,lisgs} with step length of $1$ and $1.618$\footnote{In fact, we have applied the technique discussed in \cite{YANGSIAM,admmnote} for using a step-length which can sometimes larger than $1.618$ to further improve the computational efficiency.
However, we still call the step-length is $1.618$, just for convenience.} is also reported to serve as the benchmark for comparison .

\begin{figure}
  \includegraphics[width=0.44\textwidth]{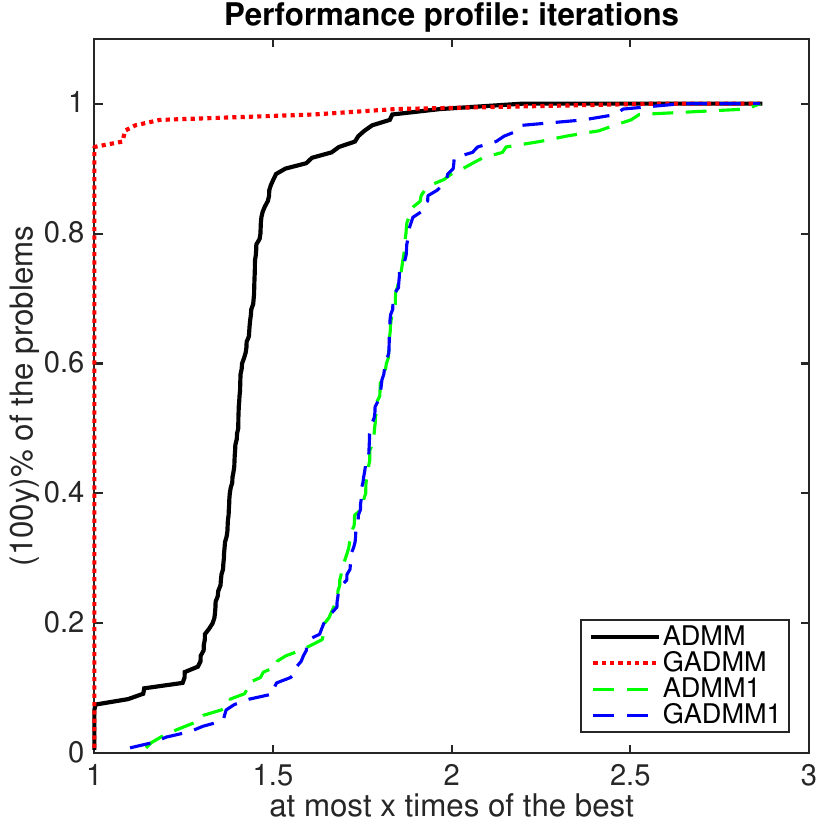}\hspace{0.5cm}
  \includegraphics[width=0.44\textwidth]{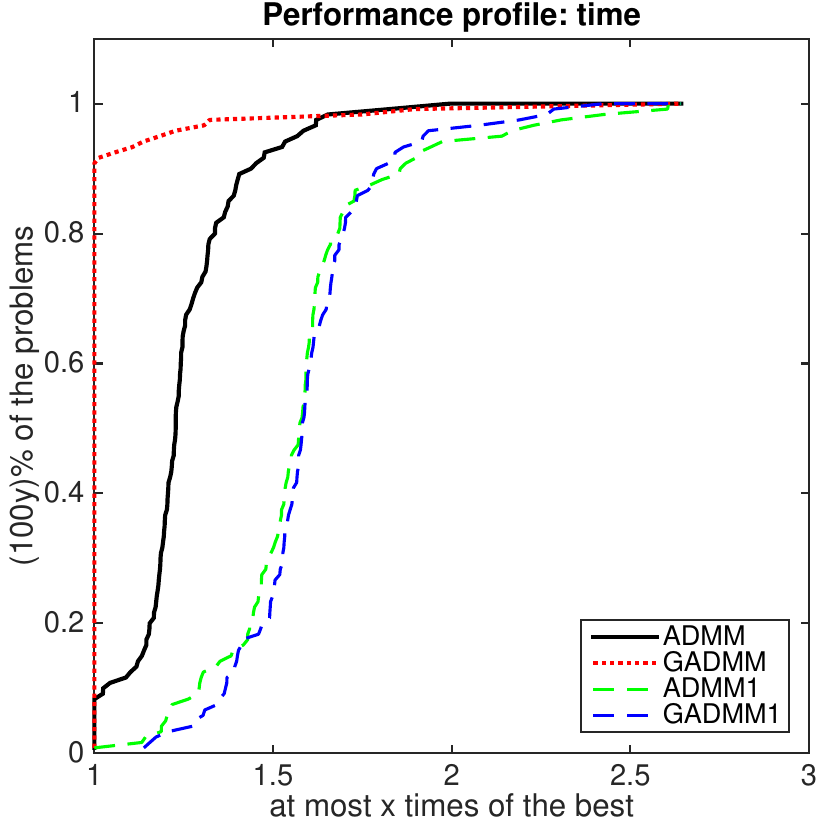}
\caption{The performance profile of  ADMM, GADMM, ADMM1 and  GADMM1 on solving DNN-SDP problems with many inequality constraints.}
\label{fig:1}
\end{figure}

Table \ref{table:sdp} presents the computational results of all the $12$ algorithms, i.e.  GADMM with $\rho=1$, $\rho=1.1$, $\ldots$, $\rho=1.9 $ and sPADMM with $\tau=1$ and $\tau=1.618$. All the algorithms are terminated if $\eta_{SDP}<10^{-6}$, or the maximum iteration number $500,000$ is achieved. We have not presented $\eta_{SDP}$ for each computed instance in this table since almost all the problems are solved to the required accuracy by each of the above mentioned algorithms.
We should mention that the GADMM with $\rho=1$ (denoted by GADMM1) and the sPADMM with $\tau=1$  (denoted by sPADMM1) are essentially the same algorithm, even if the implementations of them are slightly different. As one can observe from Table \ref{table:sdp}, the total computational time of GADMM1 and sPADMM1 are also similar, while  GADMM with $\rho=1.8$ has the minimum computational time.
One can also see that for a certain portion of problems, GADMM with $\rho=1.9$ performs the best, but there are some problems for which this relaxation parameter is too large to provide a satisfactory solution within the prescribed maximum iteration number. Therefore, generally speaking, the merit of large relaxation parameter $\rho$ is apparent, but one should avoid of the risk that brought by too large $\rho$. Therefore, we would like to suggest that $\rho=1.8$ is a desirable choice.

Figure \ref{fig:1} presents the performance profile of four algorithms (GADMM with $\rho=1$ and $\rho=1.8$, denoted by GADMM1 and GADMM, respectively, and sPADMM with $\tau=1$ and $\tau=1.618$, denoted by sPADMM1 and sPADMM, respectively).
From Figure \ref{fig:1} one can see that GADMM1 and sPADMM1 have very similar performance when solving these problems and GADMM performs the best. As a consequence, in the subsequent section, we would like to  test the performance of GADMM  with $\rho=1.8$ only for DNN-SDP problems without linear inequality constraints, and compare it with other algorithms.
Therefore, it is obvious that both the relaxation step of GADMM and large step-length of sPADMM can enhance the numerical performance of sPADMM with unit step-length and the GADMM has the probability to perform even better.

\subsection{Numerical Results on DNN-SDP without Inequality Constraints}
\begin{figure}
  \includegraphics[width=0.46\textwidth]{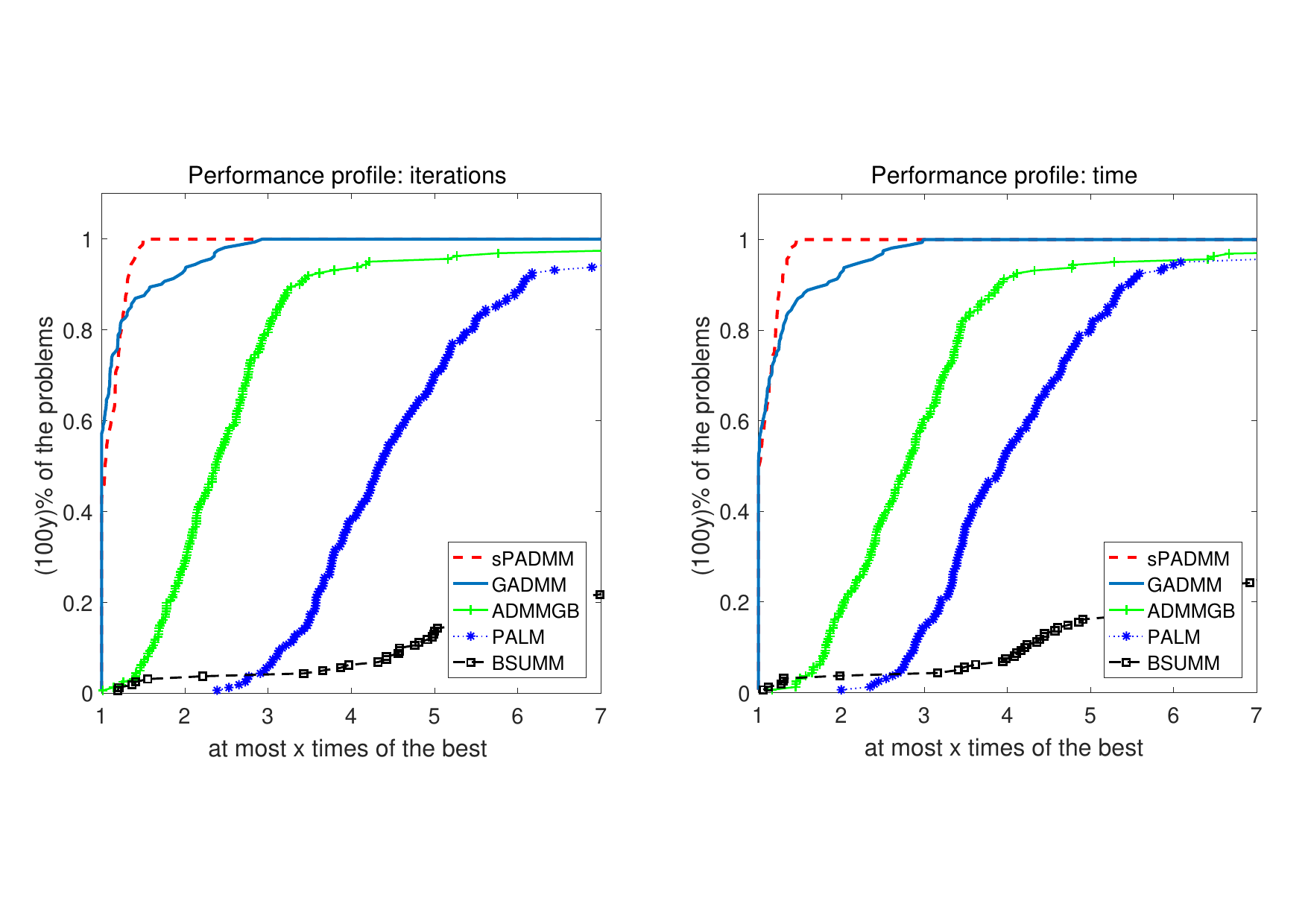}\hspace{0.2cm}
  \includegraphics[width=0.46\textwidth]{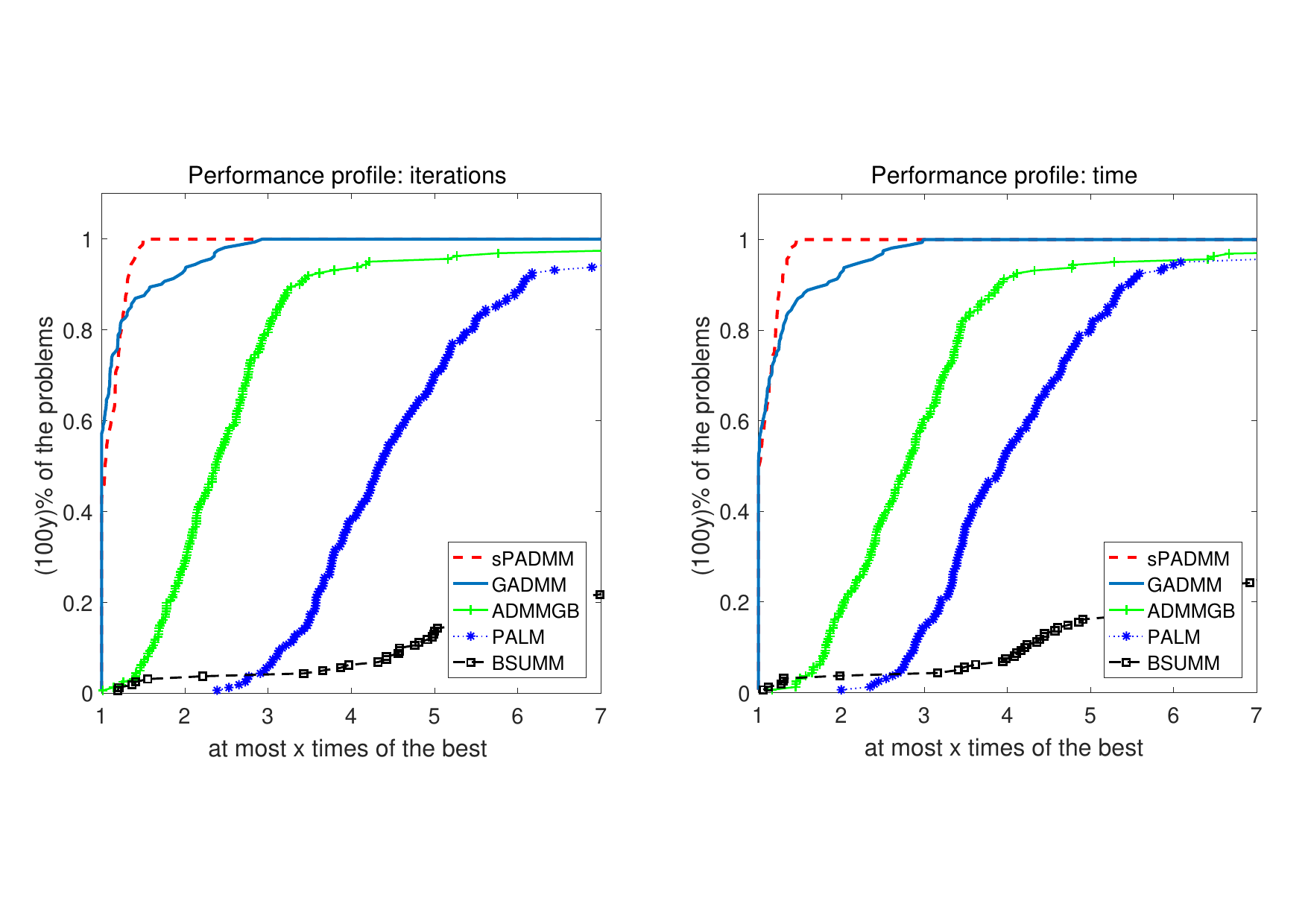}
\caption{The performance profile of sPADMM, GADMM, ADMMGB, PALM and BSUMM on solving DNN-SDP problems without inequality constraints.}
\label{fig:2}
\end{figure}

Next, we report our numerical results on solving DNN-SDP problems without inequality constraints see the performance of the proposed generalized ADMM. In this part, we fix $\rho$ for GADMM as $\rho=1.8$ since the  numerical results reported in the previous subsection suggests that this relaxation parameter performs pretty good.
The detailed description of the $281$ instances of DNN-SDP problems can be found in Sun et al. \cite[Section 5.1.1]{YANGSIAM}.

For comparison with other state-of-the-art algorithms, we also tested the sGS based sPADMM \cite{YANGSIAM,lisgs} with $\tau=1.618$, the ADMM with Gaussian back substitution \cite{he} by He et al. (denoted by ADMMGB), the parallel multi-block ADMM by Deng et al.\cite{dengwei} (denoted by PALM), and the block successive upper bound minimization method of multipliers \cite{hong} by Hong et al. (denoted by BSUMM).
We mention that all the solvers are tested by running {\sc Matlab} on an Apple Macbook Air Laptop with one Intel Core i7-5650U Processor
(2 Cores, 4 Threads, 4M Cache, 2.2 to 3.1 GHz) and 8 GB (DDR3-1600 MHz) RAM.

For ADMMGB, we set the back substitution parameter as 0.99.
For PALM, we used the proximal-linear type (c.f. \cite[Theorem 2.1]{dengwei}) proximal terms while the adaptive parameter tuning strategy in \cite[Sec. 3]{dengwei} was not used. The reason is that, during a preliminary test, we found that the improvement is very limited but sometimes it can dramatically increase the iteration numbers.
For BSUMM, it is very hard to provide a uniform way to choose the step-length. Therefore, although we have tried our best to find a strategy of providing the step-length, the computational results are not promising.
All the tested algorithms are terminated if $\eta_{\textup{SDP}}<10^{-6}$,  where
$$\eta_{\textup{SDP}}:=
\max\{\eta_D,\eta_{X},\eta_Z,  \eta_P, \eta_{S}\},$$
with $\eta_{X},\eta_Z,  \eta_P, \eta_{S}$ being defined in \eqref{stop:dnnsdp}
and $$\eta_{D}:=\frac{\|\A_{E}^{*}y_{E}+S+Z-C\|}{1+\|C\|},$$
or the maximum iteration number $500,000$ is achieved.

Table \ref{table:sdpeq} presents the computational results of
sPADMM, GADMM, ADMMGB, PALM and BSUMM. The corresponding performance profiles are presented in Figure \ref{fig:2}.
As one can observe that, the sPADMM performs the best and the GADMM proposed in this paper has similar performances for about $80\%$ of the tested problems. However, both sPADMM and GADMM are far superior than ADMMGB, PALM and BSUMM.
The reason behind the performance profile can be described as follows. Both sPADMM and GADMM used certain over-relaxation techniques and the sGS techniques for generating relatively small semi-proximal terms. The ADMMGB does not rely on proximal terms, but it can take advantage of some over-relaxation techniques. For PALM, the proximal term is much larger than other algorithms so that its efficiency is even worse than ADMMGB. Finally, the requirements for sufficiently small step-length plays the key role of making BSUMM inefficient, especially for solving problems that can not be handled by first-order methods within a small number of iterations.

%%%%%%%%%%%%%%%%%%%%%%%%%%%%%%%%%%%%%%%%%%%%%%%%%%%%%%%%%%%5
\section{Conclusions}\label{result}
%%%%%%%%%%%%%%%%%%%%%%%%%%%%%%%%%%%%%%%%%%%%%%%%%%%%%%%%%%%%
In a nutshell, this paper provides the convergence analysis for a variant of generalized ADMM with semi-proximal terms and applies it to a class of multi-block convex composite conic optimization problems. The method studied in this paper relaxes both the primal and the dual variables simultaneously and, more importantly, has the advantage of solving multi-block problems by using the techniques discussed in Sect. \ref{appl}. The semi-proximal terms for the generalized ADMM considered in this paper is more natural than the other variant since the most recent values of variables are always used in the semi-proximal terms. Numerically, we conducted a comparison between the proposed algorithm and four state-of-the-art first order solvers.
The computational results suggest that the proposed method performs similar to the sPADMM and superior than the rest.
Therefore, this paper not only provided an efficient algorithm, but also can help to get clear the performance of several first-order methods, at least for DNN-SDP problems.
We should mention that although there are many solvers \cite{pov,ZHAOSIAM,MMPC,MCOAP,WENMPC,YANGSIAM} for solving the standard linear SDP problems,  we only choose the block sGS based semi-Proximal ADMM for comparison as it has been proven in \cite{YANGSIAM} to be the most efficient ADMM-type method for solving DNN-SDP considered here.
Moreover, solving subproblems inexactly can also improve  the performance of ADMM-type algorithms, as was observed in \cite{CHENL}.
Therefore, it is of interest to combine the relaxation scheme proposed in this paper and the techniques of using approximate solutions of subproblems, and we leave it as our future work.

\section*{Acknowledgements}
We would like to thank the anonymous referees and the associate editor for their useful comments and suggestions
which improved this paper greatly. We are very grateful to Professor Defeng Sun at the National University of Singapore for sharing his knowledge with us on topics covered in this paper and beyond. The research of Y. Xiao and L. Chen was supported by the China Scholarship Council while they were visiting the National University of Singapore. The research of Y. Xiao was supported by the Major State Basic Research Development Program of China (973 Program) (Grant No. 2015CB856003), and the National Natural Science Foundation of China (Grant No. 11471101). The research of L. Chen was supported by the Fundamental Research Funds for Central Universities and the National Natural Science Foundation of China (Grant No. 11271117).
The research of D. Li was supported by the National Natural Science Foundation of China (Grant No. 11371154).

%References
% BibTeX users please use
%\bibliographystyle{spmpsci_unsrt}      % mathematics and physical sciences
%\bibliography{}   % name your BibTeX data base
% Non-BibTeX users please use

\textheight=24.5cm
\tiny
\newpage
\begin{landscape}
\begin{center}
% [inline block 0: 2 envs, 55528 chars -> data_tex | \begin{longtable}{lllr} \caption{Numerical results of ADMM and GADMM on DNN-SDP with equality and inequality constraints...]

\end{center}
\end{landscape}

\end{document}